

\documentclass[11pt,a4paper]{article}
\usepackage{a4wide}
\usepackage{makeidx}
\usepackage{amsmath,amssymb,amsbsy,enumerate,verbatim}
\usepackage{amsthm}
\usepackage{epsfig,fancyhdr,color,hyperref}

\usepackage[utf8]{inputenc}
\usepackage[T1]{fontenc}
  
 \makeindex   
 
\topmargin -2 cm
\textheight 24cm\baselineskip 14pt
\oddsidemargin -0.1 cm
\evensidemargin -1 cm
\textwidth 17 cm
 
\parindent=0pt


%

 \renewcommand{\r}{\mathbb{R}}
 \newcommand{\C}{\mathbb{C}}

\newcommand{\bb}{\pmb{\beta}}


\newtheorem{theorem}{\rm\bf Theorem}[section]
\newtheorem{proposition}[theorem]{\rm\bf Proposition}

\theoremstyle{definition}

\theoremstyle{remark}

\def\interieur#1{\mathord{\mathop{\kern 0pt #1}\limits^\circ}}

\date{\today}

\title{Riemannian Surfaces with Simple Singularities}
\date{\today}
\author{Marc Troyanov}


 
\begin{document}

\maketitle
 
\begin{abstract} In this note we discuss the geometry of Riemannian surfaces having a discrete set of singular points. We assume the conformal structure
extends through the singularities and the curvature is integrable. Such points are called \emph{simple singularities}. We first describe them locally and
then globally using the notion of (real) divisor.  We formulate a Gauss-Bonnet formula and relate it to some asymptotic isoperimetric ratio. We prove a 
classifications theorem for flat metrics with simple singularities on a compact surface and discuss the  Berger--Nirenberg Problem on surfaces with a divisor. 
We finally discuss the relation with spherical polyhedra.

\medskip 

This article is a translation of the paper \cite{Troyanov1990}, to be included in the forthcoming  book
\emph{Reshetnyak's Theory of Subharmonic Metrics}  edited by Fran\c{c}ois Fillastre and Dmitriy Slytskiy and to
be published by Springer and the  Centre de recherches mathématiques (CRM) in Montr\'eal. 

\medskip 

\noindent  AMS Mathematics Subject Classification:     \  53.c20, 52.a55 

\medskip 

\noindent Keywords:  Gauss-Bonnet formula; Riemann-Hurwitz Formula;  Uniformization; Singular Riemannian surface; spherical polyhedra; conical singularities; Berger-Nirenberg Problem.
\end{abstract}

\section{Local Description} 

Among the singular Riemannian metrics on surfaces, the simplest ones are those with isolated singularities. Away from a discrete set, these metrics are then  smooth (say of class $C^2$). We will make two additional  hypotheses which are natural from a geometric point of view.
The first concerns the conformal structure.  If $g$ is a Riemannian metric on a surface $S$ having an isolated singularity at $p$, and if $U$ is a neighborhood of $p$ homeomorphic to the disk, then $U' = U \setminus \{0\}$  has a well-defined conformal structure (since the metric $g$ is smooth on $U'$,  one may  apply Korn-Lichtenstein's Theorem). From  the classification of conformal structures on the annulus, we know that $U'$  is conformally equivalent to a standard annulus 
$$
  A_{\rho} = \{z \in \C \mid \rho < |z| < 1\},
$$
for some fixed parameter $\rho \in [0, 1)$.
In this paper, we will always assume  $U'$  to be conformally equivalent to the punctured disk $A_0$. In other words, we are assuming that the conformal structure of $U'$ extends to $U$ (i.e. the point $p$ is a removable singularity from the conformal viewpoint). 

In particular, every point of $S$ (singular or not) has a coordinate neighborhood  in which the metric  $g$  can be written as 
$$
  g = \rho(x,y)(dx^2 + dy^2) = \rho(z) |dz|^2,
$$
where $z = x+iy$ and $\rho$ is a positive function that is of class $C^2$ outside the singularities. Such coordinates are called \textit{isothermal coordinates}.

\smallskip

{\small As a counterexample, one may consider a non-negative function $\phi$ on $\C$ that vanishes exactly on a contractible compact subset $Q \subset \C$. Let us denote by $S = \C/Q$ the space obtained by identifying all the points of $Q$ and $g = \phi(z) |dz|^2$. If $Q$ contains more than one point, then $(S,g)$ has a singular point $q = [Q]$ that does not satisfy the above condition.
}

\medskip

Our  second assumption concerns the curvature. It says that if $K$ denotes the curvature   and $dA$ the area element of $g$, then
$$
 \int_{U'} |K| dA < \infty.
$$

An important class of singularities satisfying the above conditions is given by the simple singularities:

\medskip

\textbf{Definition.}    \index{Simple singularity}
A conformal metric $g$ on a Riemann surface $S$ is said to have a \textit{simple singularity 
of order} $\beta$ at $p\in S$ if it can be locally written as 
$$
  g = e^{2u(z)} |z|^{2\beta} |dz|^2,
$$
where $\beta$ is a real number and $u$ is a function satisfying
$$
  u  \in L^1  \quad  \text{and}  \quad   \Delta u  \in L^1.
$$
In this definition, $z = x + iy$ is a local coordinate on $S$ defined in a neighborhood $U$ of $p$ and such that $z (p) = 0$. The Lebesgue space $L^1$  is defined  with respect to the  Lebesgue measure $dx dy$ on $U$ and the Laplacian of $u$  is defined in the sense of distributions by
$\Delta u= - \frac{\partial^2}{\partial x^2}   - \frac{\partial^2}{\partial y^2}$.

\medskip

Simple singularities naturally appear  in several  contexts as we shall soon see. A first class of examples 
is given by the following result due to MacOwen \cite[Appendix B]{McO}:

\begin{theorem}
 Let $g = e^{2u} |dz|^2$  be a conformal metric on the unit disk $D = \{z \in \mathbb{C} \mid   |z| <1\}$ having a singularity at the origin. Suppose that $g$ is smooth on the punctured disk  $D' =  D \setminus \{0\}$.
If there exist $\ell \in \r$ and $a, b> 0$ such that the curvature $K$ of $g$ satisfies
$$
 - b|z|^{\ell} \leq K(z) \leq  - a|z|^{\ell}, 
$$
then $0$ is a simple singularity of $g$.
\end{theorem}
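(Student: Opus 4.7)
The target is to find $\beta \in \mathbb{R}$ and a function $v$ with $v, \Delta v \in L^1$ near $0$ such that $u = v + \beta\log|z|$. My plan rests on the curvature equation in isothermal coordinates, $K = e^{-2u}\Delta u$, which turns the hypothesis into the two-sided differential inequality
$$
 -b|z|^\ell e^{2u(z)} \;\leq\; \Delta u(z) \;\leq\; -a|z|^\ell e^{2u(z)} \qquad \text{on } D'.
$$
In particular $-\Delta u > 0$, so $u$ is classically subharmonic on $D'$.

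I would first extract an a priori upper bound on $u$ from the sub-solution inequality $-\Delta u \geq a|z|^\ell e^{2u}$, which has Keller--Osserman form. Rescaling on balls $B(z_0,|z_0|/2)$ (where $|z|^\ell$ is comparable to $|z_0|^\ell$) and applying Osserman's classical upper barrier for super-solutions of the Liouville equation on the unit disk yields
$$
 u(z) \;\leq\; C - \frac{\ell+2}{2}\log|z|
$$
for $|z|$ small. Consequently $u$ is locally bounded above near $0$, so as a subharmonic function it extends across $0$ and its Riesz measure $\mu := -\Delta u$ is a nonnegative Radon measure on $D$. The candidate order of the singularity is then
$$
 \beta \;:=\; \frac{\mu(\{0\})}{2\pi} \;=\; \lim_{r\to 0^+} r\, m'(r), \qquad m(r):=\frac{1}{2\pi}\int_0^{2\pi} u(re^{i\theta})\,d\theta,
$$
the limit existing because $r m'(r) = \frac{1}{2\pi}\mu(\overline{B(0,r)})$ is monotone non-decreasing (Green's formula on annuli).

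To strengthen this to the two-sided estimate $|u - \beta\log|z|| \leq C$, I would construct radial barrier functions $U_\pm(r) = \beta\log r + h_\pm(r)$ with $h_\pm$ bounded, solving the equation with extreme curvatures $-Ar^\ell$, $A \in \{a,b\}$; this reduces to the ODE $h'' + h'/r = A r^{\ell+2\beta} e^{2h}$, which admits a bounded solution on $(0,1)$ precisely when $\ell+2\beta > -2$. A sub/super-solution comparison on annuli $\{\epsilon < |z| < 1\}$ using the full two-sided bound on $K$ then yields that $v := u - \beta\log|z|$ is bounded near $0$; in particular $v \in L^\infty$, hence $v \in L^1$. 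Because subtracting $\beta\log|z|$ removes precisely the atom of $\mu$ at $0$, the distributional $\Delta v$ coincides with the function $\Delta u = K e^{2u} = K e^{2v}|z|^{2\beta}$ on $D'$; the bounds $|K|\leq b|z|^\ell$ together with boundedness of $v$ give $|\Delta v| \leq C'|z|^{\ell+2\beta}$, which lies in $L^1$ near $0$ exactly because $\ell+2\beta > -2$.

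\textbf{Main obstacle.} The heart of the argument is reconciling the borderline-non-integrable Keller--Osserman bound of the second step with the sharper asymptotic $u = \beta\log|z| + O(1)$ required for the integrability conclusion. The barrier comparison step --- where the $\beta$ coming from the ODE solvability condition $\beta > -(\ell+2)/2$ must be matched with the $\beta$ extracted from the Riesz measure, and the boundary data on the inner circle $|z|=\epsilon$ controlled as $\epsilon\to 0$ --- is the most delicate technical point, and it is here that the two-sided control on $K$ is really essential; the one-sided bound $K \leq -a|z|^\ell$ alone supports only the preliminary estimate.
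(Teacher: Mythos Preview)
The paper does not actually prove this theorem: it is quoted as a result of McOwen, with a pointer to \cite[Appendix B]{McO}, and no argument is supplied in the text. So there is no in-paper proof to set your proposal against.

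Taken on its own terms, your sketch has a concrete gap at the passage from the Keller--Osserman estimate to the Riesz-measure step. From the Osserman barrier you correctly derive $u(z)\le C-\tfrac{\ell+2}{2}\log|z|$, but you then write ``Consequently $u$ is locally bounded above near $0$''. That conclusion is false whenever $\ell>-2$: the complete hyperbolic cusp $g=\dfrac{|dz|^2}{|z|^{2}(\log|z|)^{2}}$ has $K\equiv -1$ (so $\ell=0$), yet $u(z)=-\log|z|-\log\lvert\log|z|\rvert\to+\infty$ as $z\to 0$. The subharmonic extension across the puncture can still be salvaged (apply it to $u+M\log|z|$ for $M$ large, or work with circular means), so this part is repairable.

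The same example, however, exposes a harder problem in your barrier/ODE step. For the cusp one has $\beta=-1$ and $\ell+2\beta=-2$, the exact borderline where your solvability criterion ``$\ell+2\beta>-2$'' fails and where $|z|^{\ell+2\beta}\notin L^1$. In that regime the bound $|\Delta v|\le C'|z|^{\ell+2\beta}$ you aim for is not integrable; the actual $L^1$ control comes from an additional logarithmic decay in $v$ that a bounded-barrier comparison cannot produce. Your outline explicitly assumes the strict inequality $\beta>-(\ell+2)/2$ and therefore does not cover the cusp case, which is precisely the most delicate one and is squarely included in the statement. This is the place where a genuinely new idea (finer asymptotics of $v$, not just boundedness) is required.
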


A simple singularity of order $\beta < -1$  is always at infinite distance while a simple singularity of order $\beta > -1$ is always at a finite distance of ordinary points. For a singularity of order $- $1, both cases can occur, see \S 2.2 in  \cite{HT}. A \textit{cusp} \index{Cusp} is a simple singularity  of order  $\beta = -1$ admitting a neighborhood of finite area.

\medskip

A simple singularity of order  $\beta > -1$  is also called a \textit{conical singularity}   \index{Conical Singularity}  
of (total)  angle $\theta = 2\pi  (\beta+ 1)$. Such a singularity can indeed be approximated by a Euclidean cone of total angle $\theta$. In particular, if the curvature of $g$ is  bounded in some neighborhood of a conical singularity, there exists an ``exponential map'' making it possible to parametrize a neighborhood of the conical singularity by a neighborhood of the vertex of its tangent cone. In other words, one can introduce polar coordinates near a conical singularity. Moreover, if the curvature is continuous, then these polar coordinates are of class $C^1$ with respect to the isothermal coordinates, see \cite{Troyanov1990b} for proofs of these facts. 

\section{Global   description}

To investigate singular surfaces  having several simple singularities, it is convenient to introduce the following notion:
\medskip 

\textbf{Definition.}  \index{Divisor}
A   \textit{divisor}  on a Riemann surface $S$ is a formal sum 
$\pmb{\beta} = \sum_{i=1}^n \beta_i p_i$.
The \textit{support} of this divisor is the set $\mathrm{supp} (\bb) = \{p_1, \dots, p_n\}$. A conformal metric $g$ on $S$ \textit{represents the divisor} $\bb$ if it is smooth on the complement of $\mathrm{supp} (\bb)$ and if $g$ has a simple singularity of order $\beta_i$ at $p_i$  for $i = 1, \dots, n$.

\medskip 

\textbf{Examples.} \  (1) The metric $g = |dz|^2$ on the Riemann sphere $\C \cup \{\infty\}$ represents the divisor $\bb = (-2)\cdot \infty$. 

\medskip

(2) More generaly, the metric $g = |z|^{2\alpha}|dz|^2$ on $\C \cup \{\infty\}$ represents the divisor $\bb = \alpha \cdot 0 +  (-2- \alpha)\cdot \infty$. 

\medskip

 (3) If $\omega = \varphi (z) dz$ is a meromorphic differential on the Riemann surface $S$, then $g = |\omega|^2$ is a flat Riemannian metric with simple singularities representing the divisor $\bb = \mathrm{div} (\omega)$. 

\medskip

(4)  If $(S_1,g_1)$ is a smooth Riemannian surface and $f : S \to S_1$ is a branched  covering, then $g = f^*(g_1)$ is a Riemannian metric on $S$ representing the ramification divisor of $f$, that is $\bb = \sum_p O_p(f) \cdot p$, where $O_p(f)$ is the ramification order of $f$ at $p$  (i.e. the local degree minus $1$).

\medskip

(5) If $S$ is a two-dimensional polyhedron (euclidean, spherical or hyperbolic) with vertices $p_1, \dots, p_n$, then the metric induced by the geometric realization of that polyhedron represents the divisor $\pmb{\beta} = \sum_{i=1}^n \beta_i p_i$, where 
$2\pi(\beta_i+1) = \theta_i$ is the sum of the angles at $p_i$ of all faces incident with $p_i$.

\medskip

(6) Let $(\tilde{S},\tilde{g})$ be a smooth Riemannian surface on which a finite group $\Gamma$ acts by isometries. If
$S = \tilde{S}/\Gamma$ is a surface without boundary, then it inherits a Riemannian metric with simple singularities representing the divisor
$\pmb{\beta} = \sum_{i=1}^n \beta_i p_i$, where $\beta_i = (\frac{1}{n_i}-1)$. Here, the point $p_i$ is the image of a point
$\tilde{p}_i \in \tilde{S}$ such that $\Gamma$ has a stabiliser of order $n_i$ at $\tilde{p}_i $. This examples generalizes to two-dimensional orbifolds.

\medskip 

In examples 4 to 6, all the singular points are conical singularities. An important source of examples, where no singular point is conical, is given by the following Theorem:
\begin{theorem}\label{th.Huber} 
 Let $(S', g ')$ be a complete Riemannian surface of class $C^2$ with finite total curvature: $\int_{S'}|K|dA < \infty$. Then there exists a compact Riemann surface $S$, a divisor $\pmb{\beta} = \sum_{i=1}^n \beta_i p_i$ on $S$  such that $\beta_i \leq -1$ for all $i$,  and a conformal metric $g$ on $S$ representing this divisor such that   $(S', g ')$ is isometric to
$(S \setminus \mathrm{supp}(\bb), g)$.
\end{theorem}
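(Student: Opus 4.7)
The plan is to proceed in three stages: establish finite topological type of $S'$, extend the induced conformal structure across each end, and then prove that the metric has a simple singularity of order $\beta_i \leq -1$ at each resulting puncture.

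For the first stage, I would invoke Huber's classical finiteness theorem: a complete Riemannian surface with finite total curvature is homeomorphic to a closed surface $S$ minus finitely many points $p_1, \ldots, p_n$. This rests on the Cohn-Vossen inequality $\int_{S'} K\, dA \leq 2\pi\chi(S')$ applied to an exhaustion of $S'$ by compact subsurfaces with piecewise-geodesic boundary. The smooth conformal structure of $g'$ on $S'$ then transports canonically to $S \setminus \{p_1, \ldots, p_n\}$.

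For the second stage, a punctured neighborhood of each $p_i$ is conformally equivalent to some annulus $A_\rho$ with $\rho \in [0, 1)$, and I must rule out $\rho > 0$. A complete conformal metric on $A_\rho$ with $\rho > 0$ must have a conformal factor that blows up at the inner boundary $|z| = \rho$; on the other hand, applying Gauss-Bonnet to the subannuli $\{\rho < |z| < r\}$ and letting $r \to \rho^+$, the integrability $\int |K|\, dA < \infty$ forces the average boundary geodesic curvature to remain controlled, which is incompatible with completeness. Thus $\rho = 0$ and each end is conformally a punctured disk $D'$.

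For the third stage, write $g = e^{2U}|dz|^2$ on $D'$ with $U \in C^2(D')$. The Gauss equation $\Delta U = K e^{2U}$ combined with $\int|K|\,dA < \infty$ gives $\Delta U \in L^1(D')$. Defining the Newtonian potential
$$
w(z) = \frac{1}{2\pi}\int_D \log\frac{1}{|z-\zeta|}\,\Delta U(\zeta)\,d\xi\, d\eta,
$$
one has $w \in L^1(D)$ and $\Delta w = \Delta U$ on $D'$, so that $U - w$ is harmonic on $D'$ and admits a Fourier expansion $U - w = a_0 + \beta\log|z| + \sum_{n \neq 0}(c_n r^n + d_n r^{-n})e^{in\theta}$. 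Completeness forces the upper bound $U \leq \beta\log|z| + O(1)$ near $0$, eliminating the coefficients $d_n$; the resulting decomposition $U = \beta\log|z| + v$ with $v, \Delta v \in L^1$ exhibits a simple singularity of order $\beta$ at $p_i$. The inequality $\beta_i \leq -1$ is immediate from completeness, since a simple singularity of order $\beta > -1$ is conical and at finite distance. The main obstacle lies in the second stage, where one must delicately leverage the finite-total-curvature hypothesis to rule out a nondegenerate conformal annulus and so identify the correct conformal type of each end.
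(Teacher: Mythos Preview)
The paper does not supply its own proof of this theorem: immediately after the statement it attributes the result to Huber and refers the reader to \S 1.1 and \S 2.9 of \cite{HT} for a proof in this formulation. There is therefore no in-paper argument to compare against, only the cited source, and your three-stage outline (finite topology via Cohn--Vossen, parabolicity of the ends, potential-theoretic extraction of the order $\beta_i$) is indeed the route taken in \cite{HT}. Note also the paper's own caveat after the Cohn--Vossen inequality: Huber's theorem is proved \emph{using} Cohn--Vossen, so your Stage~1 is not circular, but one should be aware of the logical dependence.

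Your Stage~3 contains a misstatement that would derail the argument as written. You claim that ``completeness forces the upper bound $U \leq \beta\log|z| + O(1)$, eliminating the coefficients $d_n$.'' Completeness of $e^{2U}|dz|^2$ at the puncture is a constraint from \emph{below} on the conformal factor: along every ray into $0$ the integral $\int_0 e^{U}\,dr$ must diverge, so $U$ cannot be too negative. What actually kills a nonzero mode $d_n r^{-|n|}e^{in\theta}$ is that such a term forces $h = U - w$ to tend to $-\infty$ faster than logarithmically on certain angular sectors, which (once $w$ is controlled) produces rays of finite length, contradicting completeness. Phrasing this correctly also exposes a genuine technical gap: the Newtonian potential $w$ of an $L^1$ density is only in $L^1(D)$, not pointwise bounded, so you cannot immediately pass from $h \to -\infty$ on a sector to $U \to -\infty$ there. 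One needs either a finer potential-theoretic estimate on $w$, or the alternative route of showing directly that $U \in L^1_{\mathrm{loc}}(D)$ so that the distributional Laplacian of $U$ across the origin is a Radon measure, whose atomic part at $0$ then yields the coefficient $\beta$. Your closing assessment that Stage~2 (ruling out a genuine annulus end) is the main obstacle is accurate; your sketch there is honest but thin, and that step is precisely the substantive content of Huber's 1957 work rather than a routine Gauss--Bonnet computation.
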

This result is essentially due to A. Huber, we  refer to \S 1.1 and  \S 2.9 in  \cite{HT} for a discussion and a proof  of Huber's theorem in the above formulation.  
It is not difficult to see that if the surface  $(S', g ')$  has finite area, then $\beta_i = -1$ for all $i$.

\section{Some global geometry}

For compact Riemannian surfaces with simple singularities, there is a well known Gauss-Bonnet Formula (see e.g. \cite{Finn} for the case where all orders satisfy $\beta_i \leq -1$). To state the Formula we define  the \textit{Euler characteristic of a surface $S$ with divisor} as  $\pmb{\beta} = \sum_{i=1}^n \beta_i p_i$ as $\chi(S, \bb) = \chi(S) + \sum_i \beta_i$.

\begin{theorem}[The Gauss-Bonnet Formula]  \index{Gauss-Bonnet Formula} \label{GB}
Let $(S, g)$ be a compact Riemannian surface whose metric represents a divisor $\bb$. Then the total curvature of $(S, g)$ is finite and 
we have
$$
 \frac{1}{2\pi}\int_S K dA = \chi(S, \bb).
$$
\end{theorem}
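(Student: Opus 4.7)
The plan is to excise small isothermal disks around each singularity, apply the classical Gauss--Bonnet formula on the resulting smooth bordered surface, and then take the limit as the disks shrink. Fix isothermal coordinates $z_i$ around each $p_i$ with $z_i(p_i)=0$ and $g=e^{2u_i}|z_i|^{2\beta_i}|dz_i|^2$, and let $D_i(\epsilon)=\{|z_i|<\epsilon\}$, $S_\epsilon=S\setminus\bigsqcup_i D_i(\epsilon)$, so that $\chi(S_\epsilon)=\chi(S)-n$. Since $g$ is smooth on $S_\epsilon$, the classical Gauss--Bonnet theorem, once we flip orientations so that each $\partial D_i(\epsilon)$ is traversed counterclockwise in the $z_i$-plane, reads
\[
\int_{S_\epsilon} K\,dA \;=\; 2\pi\bigl(\chi(S)-n\bigr) + \sum_{i=1}^n \int_{\partial D_i(\epsilon)} k_g\,ds.
\]
It therefore suffices to establish (i) that $\int_S K\,dA$ is finite and is the limit of the left-hand side, and (ii) that each boundary integral tends to $2\pi(\beta_i+1)$ as $\epsilon\to 0$; combining the two yields $\int_S K\,dA=2\pi\chi(S)+2\pi\sum_i\beta_i=2\pi\chi(S,\bb)$.

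For (i), observe that a conformal metric $e^{2v}|dz|^2$ satisfies $K\,dA=\Delta v\,dx\,dy$ in the paper's sign convention, so with $v=u_i+\beta_i\log|z_i|$ one gets $K\,dA=\Delta u_i\,dx\,dy$ pointwise on the punctured disk, since $\Delta\log|z|=0$ away from $0$. Hence $\int_{D_i(\epsilon)}|K|\,dA\leq\int_{D_i(\epsilon)}|\Delta u_i|\,dx\,dy\to 0$ as $\epsilon\to 0$ by absolute continuity of the Lebesgue integral, yielding both finiteness and the desired convergence. For (ii), a direct calculation from $k_g=e^{-v}(1/r+\partial_r v)$ on the Euclidean circle $|z|=r$ in the metric $e^{2v}|dz|^2$, together with $ds=e^v r\,d\theta$, gives
\[
\int_{|z|=r} k_g\,ds \;=\; \int_0^{2\pi}\bigl(1+r\,\partial_r v\bigr)\,d\theta \;=\; 2\pi(\beta+1) + r\int_0^{2\pi}\partial_r u(r,\theta)\,d\theta,
\]
where $v=u+\beta\log|z|$. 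The theorem thus reduces to showing that the error $r\int_0^{2\pi}\partial_r u(r,\theta)\,d\theta$ tends to $0$ as $r\to 0$.

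This vanishing is the main technical obstacle, since $u$ is only assumed to satisfy $u,\Delta u\in L^1$. I would decompose $u=u_0+h$ on a small coordinate disk $D_R$, where
\[
u_0(z) \;=\; -\tfrac{1}{2\pi}\int_{D_R}\log|z-w|\,\Delta u(w)\,dw
\]
is the Newtonian potential (normalized so that $\Delta u_0=\Delta u$ in the paper's sign convention) and $h=u-u_0$ is harmonic in $D_R$ by Weyl's lemma. The circle mean-value property for $h$ gives $\int_0^{2\pi}\partial_r h(r,\theta)\,d\theta=0$ for every $r$, while the classical identity $\tfrac{1}{2\pi}\int_0^{2\pi}\log|re^{i\theta}-w|\,d\theta=\log\max(r,|w|)$, differentiated in $r$, yields
\[
r\int_0^{2\pi}\partial_r u_0(r,\theta)\,d\theta \;=\; -\int_{|w|<r}\Delta u(w)\,dw,
\]
which tends to zero as $r\to 0$ by absolute continuity. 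The only care required is to justify the differentiation under the integral and the Fubini swap when $\Delta u$ is merely in $L^1$; this can be handled by first mollifying $\Delta u$, applying the identity above, and passing to the limit in $L^1$. Once this technical step is in place, the combination of (i) and (ii) completes the proof.
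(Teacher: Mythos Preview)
Your argument is correct. The paper itself does not give a proof of this theorem; it simply refers to \cite[Theorem~2.8]{HT}. Your excision--classical Gauss--Bonnet--limit scheme is exactly the standard route, and your computations of $K\,dA=\Delta u_i\,dx\,dy$ on the punctured disk and of $\int_{|z|=r}k_g\,ds=2\pi(\beta+1)+r\int_0^{2\pi}\partial_r u\,d\theta$ are right, with the correct sign and orientation bookkeeping.

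The only point worth tightening is the final technical step. Your decomposition $u=u_0+h$ is fine: since $u_0$ is the Newtonian potential of an $L^1$ function it lies in every $L^p_{\mathrm{loc}}$, hence $h=u-u_0\in L^1$ with $\Delta h=0$ distributionally, and Weyl's lemma gives $h$ harmonic and smooth across the origin, so indeed $\int_0^{2\pi}\partial_r h\,d\theta=0$. For $u_0$, rather than differentiating $\log|re^{i\theta}-w|$ under the double integral and then mollifying, it is slightly cleaner to first compute the circular mean
\[
\bar u_0(r)=-\frac{1}{2\pi}\int_{D_R}\log\max(r,|w|)\,\Delta u(w)\,dw
\]
(immediate from Fubini and the Poisson--Jensen identity), and then differentiate this one--variable expression to obtain $2\pi r\,\bar u_0'(r)=-\int_{|w|<r}\Delta u(w)\,dw\to 0$. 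Since $u$ is smooth on the punctured disk (the metric is), $u_0=u-h$ is smooth there as well, so $\bar u_0'(r)=\frac{1}{2\pi}\int_0^{2\pi}\partial_r u_0\,d\theta$ for $r>0$ and no mollification is needed. With this in place your proof is complete.
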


See e.g.  \cite[Theorem 2.8]{HT} for a proof.

\medskip 

For example, if $\omega$ is a meromorphic differential on the closed Riemann surface of genus $\gamma$, then the Gauss-Bonnet Formula implies that the degree of $\omega$ (i.e. the  number of zeroes minus the number of poles), is equal to $2\gamma  - 2$. Indeed, $g = |\omega|^2$ is a flat metric representing the divisor $\mathrm{div}(\omega)$. Another application of the Gauss-Bonnet formula is the Riemann-Hurwitz formula:

\begin{proposition}[The Riemann-Hurwitz Formula]   \index{Riemann-Hurwitz Formula}
Let $f : S \to S_1$ be a branched cover of degree $d$ between two closed surfaces, then 
$$
  \chi(S) + \sum_{p\in S} O_p(f)  = d \chi(S_1),
$$
where $O_p(f)$ is the branching order of $f$ at $p$. 
\end{proposition}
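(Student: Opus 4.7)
The plan is to deduce Riemann--Hurwitz from the Gauss--Bonnet formula (Theorem \ref{GB}) applied to a pullback metric on $S$, exactly in the spirit of Example 4. First I choose any smooth Riemannian metric $g_1$ on the closed smooth surface $S_1$ and set $g = f^{*}g_1$ on $S$. By Example 4, $g$ is a Riemannian metric on $S$ with simple (in fact conical) singularities representing the ramification divisor
$$\bb = \sum_{p\in S} O_p(f)\cdot p,$$
the sum being finite since a branched cover of a compact surface has only finitely many branch points.

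Next I compute the total curvature $\int_S K\,dA$ of $(S,g)$ in two different ways. Applying Theorem \ref{GB} to $(S,g)$ gives
$$\frac{1}{2\pi}\int_S K\,dA = \chi(S,\bb) = \chi(S) + \sum_{p\in S} O_p(f).$$
On the other hand, $f$ restricts to a smooth $d$-sheeted covering, and a local isometry between $g$ and $g_1$, off the ramification locus, which has measure zero. A change of variables then yields
$$\int_S K\,dA = d \int_{S_1} K_1\,dA_1,$$
and the classical smooth Gauss--Bonnet formula applied to $(S_1,g_1)$ identifies the right-hand side with $2\pi d\,\chi(S_1)$. Equating the two expressions for $\frac{1}{2\pi}\int_S K\,dA$ delivers the asserted identity.

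The only point requiring some care is the change of variables step: one must check that removing the finite ramification set of $f$ from $S$, together with its image from $S_1$, does not affect either curvature integral, and that on the respective complements $f$ is genuinely a $d$-to-$1$ local isometry for the pair $(g, g_1)$. Both facts follow immediately from the definitions: away from the branch points, $f$ is a local diffeomorphism and $g = f^{*}g_1$, so each regular value of $f$ has exactly $d$ preimages, each contributing the pulled-back curvature density to the integral over $S$. Once this is granted, no further analysis of the behavior of $g$ near its singularities is necessary, since the singular Gauss--Bonnet formula has already absorbed that information into $\chi(S,\bb)$.
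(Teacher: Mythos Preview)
Your proof is correct and follows essentially the same route as the paper: pull back a smooth metric $g_1$ via $f$, invoke Example~4 to identify the resulting divisor, apply the singular Gauss--Bonnet formula (Theorem~\ref{GB}) on $S$, and use $\int_S K\,dA = d\int_{S_1}K_1\,dA_1$ together with the smooth Gauss--Bonnet on $S_1$. The only difference is that you spell out the justification for the change-of-variables identity, which the paper simply states as obvious.
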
 

\begin{proof}
Pick an arbitrary smooth metric $g_1$ on $S_1$ and set $g = f^*(g_1)$. Then $g$ is a Riemannian metric with simple singularities on
  $S$ representing the ramification  divisor of $f$. The above formula follows now from Theorem \ref{GB},  since we obviously have 
$$
\int_S K dA = d \cdot \int_{S_1} K_1 dA_1. 
$$
\end{proof}

\medskip

Huber's theorem, together  with the Gauss-Bonnet Formula, is a refinement of the Cohn-Vossen inequality:

\begin{proposition}[The Cohn-Vossen inequality]  \index{Cohn-Vossen inequality}
Let $(S', g')$ be a compact Riemannian surface of class $C^2$ with finite total curvature: $\int_{S'} |K'| dA' < \infty$, then we have
$$
 \frac{1}{2\pi}\int_{S'} K' dA' \leq  \chi(S').
$$
Moreover  we have equality if  $(S', g')$ has finite area.
\end{proposition}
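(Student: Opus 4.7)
The plan is to reduce the statement to the Gauss--Bonnet Formula (Theorem \ref{GB}) by invoking Huber's theorem (Theorem \ref{th.Huber}). Note that the hypothesis should be read as ``complete'' rather than ``compact'' (otherwise Gauss--Bonnet gives equality directly), so I assume $(S',g')$ is complete with $\int_{S'}|K'|dA' < \infty$.

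First I apply Theorem \ref{th.Huber} to produce a compact Riemann surface $S$, a divisor $\pmb{\beta} = \sum_{i=1}^n \beta_i p_i$ with $\beta_i \leq -1$ for every $i$, and a conformal metric $g$ representing $\pmb{\beta}$ such that $(S',g')$ is isometric to $(S \setminus \mathrm{supp}(\pmb{\beta}), g)$. Since $\mathrm{supp}(\pmb{\beta})$ is finite and hence negligible for the area measure, the total curvature is preserved under this isometry:
$$
\int_{S'} K' dA' = \int_S K \, dA.
$$
Applying Theorem \ref{GB} to $(S,g)$ yields
$$
\frac{1}{2\pi}\int_{S'} K' dA' = \chi(S,\pmb{\beta}) = \chi(S) + \sum_{i=1}^n \beta_i.
$$

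Next I compare $\chi(S)$ with $\chi(S')$. Topologically, $S'$ is homeomorphic to $S$ with $n$ points removed, so $\chi(S') = \chi(S) - n$. Substituting this identity gives
$$
\frac{1}{2\pi}\int_{S'} K' dA' = \chi(S') + \sum_{i=1}^n (\beta_i + 1).
$$
Because each $\beta_i \leq -1$, every term in the sum on the right is non-positive, which proves the inequality.

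For the equality case, the remark immediately following Theorem \ref{th.Huber} states that if $(S',g')$ has finite area, then necessarily $\beta_i = -1$ for every $i$; hence $\sum_i(\beta_i + 1) = 0$ and we obtain equality in the formula above. The only delicate point in this plan is justifying that the Huber representation is legitimately available (the divisor orders and the isometry statement), but this is exactly the content of Theorem \ref{th.Huber}, so no additional work is needed here. Everything else is bookkeeping with Euler characteristics.
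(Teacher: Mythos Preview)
Your proof is correct and follows exactly the same route as the paper: apply Huber's Theorem \ref{th.Huber} to compactify, invoke the Gauss--Bonnet Formula (Theorem \ref{GB}) on the compactification, and use $\chi(S') = \chi(S) - n$ together with $\beta_i \leq -1$ to get the inequality (with $\beta_i = -1$ in the finite-area case). Your observation that the hypothesis should read ``complete'' rather than ``compact'' is also correct --- this is a typo in the stated proposition, as the surrounding context (Huber's theorem, the finite-area clause) makes clear.
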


\begin{proof}
Huber's Theorem tells us that $(S', g')$ admits a compactification $(S, g)$ where $g$ is a metric representing a divisor
$\pmb{\beta} = \sum_{i=1}^n \beta_i p_i$ such that $\beta_i \leq -1$ for all $i$. We then have from the Gauss-Bonnet Formula
$$
   \frac{1}{2\pi}\int_{S'} K' dA' =  \frac{1}{2\pi}\int_{S} K dA = \chi(S) +  \sum_{i=1}^n \beta_i  \leq \chi(S) -n = \chi(S').
$$
If $S'$ has finite area, then we have  $\beta_i =-1$ for all $i$ and the above inequality is in fact an equality. 
\end{proof}
{\small Note that although the Cohn-Vossen inequality is  a consequence of Huber's Theorem  \ref{th.Huber} and the Gauss-Bonnet Formula, 
one should not  consider it to be a corollary of these results. The reason is that the proof of Huber's theorem is in part based on the Cohn-Vossen inequality.}

\medskip

The difference between $\chi(S')$ and $\chi(S, \bb)$ in the Cohn-Vossen inequality is an isoperimetric constant: 

\begin{theorem}
Let $(S, g)$ be a compact Riemannian surface whose metric $g$ represents a divisor $\pmb{\beta} = \sum_{i=1}^n \beta_i p_i$ such that $\beta_i \leq -1$ for all $i$.  
Fix a point $q$ on $S' = S\setminus \{p_1, \dots, p_n\}$ and denote by  $A(q, r)$  the area of $B_q(r) := \{x \in S' \mid d(q,x) \leq r\}$ and $L(q, r)$ the length of $\partial B_q(r)$. Then
$$
 \lim_{r \to \infty} \frac{L^2(q,r)}{4\pi A(q,r)}  = - \sum_{i=1}^n (\beta_i +1) = \chi(S') -  \chi(S, \bb).
$$
\end{theorem}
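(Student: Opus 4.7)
\medskip

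\textbf{Proof plan.} The strategy is to compute the asymptotics of $L(q,r)$ and $A(q,r)$ separately: first apply the Gauss--Bonnet formula on the ball $B_q(r)$ to control the integrated geodesic curvature of its boundary, then the first-variation formula to read off $L'(q,r)$, and finally the coarea formula to get $A(q,r)$. Since every $\beta_i\le -1$, each singular point lies at infinite distance from $q$ (as recalled in \S 1), so for every $r$ the ball $B_q(r)\subset S'$ is a compact domain in the smooth part of $(S,g)$ and is disjoint from $\mathrm{supp}(\bb)$. For $r$ large enough $B_q(r)$ is diffeomorphic to $S$ with one small open disk removed around each $p_i$, so
$$
\chi(B_q(r)) \;=\; \chi(S)-n \;=\; \chi(S').
$$

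Applying the smooth Gauss--Bonnet formula to $B_q(r)$ for such $r$ gives
$$
\int_{B_q(r)} K\,dA + \int_{\partial B_q(r)} k_g\,ds \;=\; 2\pi\,\chi(S').
$$
Letting $r\to\infty$, dominated convergence together with Theorem \ref{GB} yields $\int_{B_q(r)} K\,dA \to \int_S K\,dA = 2\pi\,\chi(S,\bb)$, whence
$$
\lim_{r\to\infty}\int_{\partial B_q(r)} k_g\,ds \;=\; 2\pi\bigl(\chi(S')-\chi(S,\bb)\bigr) \;=\; -2\pi\sum_{i=1}^n(\beta_i+1).
$$
In geodesic polar coordinates $(r,\theta)$ about $q$ (valid on the complement of the cut locus), the metric takes the form $dr^2 + J(r,\theta)^2\,d\theta^2$, so $L(q,r) = \int J(r,\theta)\,d\theta$ and the geodesic curvature of $\{r=\mathrm{const}\}$ equals $J_r/J$; hence $L'(q,r) = \int J_r\,d\theta = \int_{\partial B_q(r)} k_g\,ds$. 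Together with the previous display this gives $L'(q,r) \to -2\pi\sum(\beta_i+1)$, and an integration yields $L(q,r)/r \to -2\pi\sum(\beta_i+1)$. The coarea formula $A(q,r) = \int_0^r L(q,s)\,ds$ then produces $A(q,r)/r^2 \to -\pi\sum(\beta_i+1)$, and taking the quotient
$$
\frac{L^2(q,r)}{4\pi A(q,r)} \;\longrightarrow\; -\sum_{i=1}^n(\beta_i+1) \;=\; \chi(S')-\chi(S,\bb),
$$
as claimed.

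The one genuinely delicate step is the identity $L'(q,r) = \int_{\partial B_q(r)} k_g\,ds$: the distance function to $q$ is only Lipschitz across the cut locus and $\partial B_q(r)$ need not be smooth. The standard remedy uses Hartman's two-dimensional structure theorem for the cut locus of a point on a complete Riemannian surface: it is a locally rectifiable graph, and its contribution to the distributional derivative of $L(q,r)$ is non-positive, so one gets the one-sided bound $L'(q,r) \le \int_{\partial B_q(r)} k_g\,ds$. This already yields $\limsup L(q,r)/r \le -2\pi\sum(\beta_i+1)$, and the matching lower bound can be obtained either by a direct model analysis at each end---which is asymptotic to a Euclidean cone of angle $-2\pi(\beta_i+1)$ when $\beta_i<-1$ and to a standard cusp when $\beta_i=-1$---or by showing that the cut-locus contribution to $L'(q,r)$ decays as $r\to\infty$. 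This cut-locus bookkeeping is the only place where care beyond elementary smooth calculus is required.
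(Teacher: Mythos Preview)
The paper itself gives no proof of this theorem: it simply attributes the result to Shiohama and cites Finn for a partial version. So there is no ``paper's own proof'' to compare against.

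Your outline is the standard Fiala--Hartman--Shiohama strategy, and the upper-bound half is essentially right once two imprecisions are fixed. First, \S1 of the paper says explicitly that a singularity of order exactly $-1$ can lie at \emph{finite} distance; you need completeness of $(S',g)$ as an additional hypothesis (it is implicit here because the theorem is stated immediately after Huber's compactification of complete surfaces). Second, the Gauss--Bonnet formula on $B_q(r)$ must include exterior-angle terms at the non-smooth points of $\partial B_q(r)$; together with Hartman's cut-locus structure theorem this yields only the one-sided Fiala--Hartman inequality
\[
L'(r)\ \le\ 2\pi\,\chi(B_q(r))-\int_{B_q(r)}K\,dA
\]
for a.e.\ $r$, not the equality $L'(r)=\int_{\partial B_q(r)}k_g\,ds$ you first wrote. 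From there your Ces\`aro-type deductions $L(r)/r\to c$, $A(r)/r^2\to c/2$ and the final quotient are correct.

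The genuine gap is the matching lower bound, which you defer to the last paragraph. Neither of your two proposed routes works as stated. The ``direct model analysis at each end'' presumes the end is asymptotic to a Euclidean cone (or a standard cusp), but the definition of a simple singularity only requires $u\in L^1$ and $\Delta u\in L^1$; the conformal factor $e^{2u}$ need not even be continuous at $p_i$, so the end is not asymptotically conical in any sense strong enough to read off $\liminf L(r)/r$ by inspection. And ``showing that the cut-locus contribution to $L'(q,r)$ decays as $r\to\infty$'' is not bookkeeping: it is exactly the substantive analytic content of Shiohama's paper, requiring control of the geometry of geodesics going out each end. Your plan correctly locates where the difficulty lies, but it does not indicate a way through it.
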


This result is due to K. Shiohama \cite{Shiohama}, but R. Finn obtained a partial result in this direction \cite[Theorem 10]{Finn}.

\section{Classifying Flat Metrics}

Let us now formulate a classification theorem for flat metrics with simple singularities on a compact surface.

\begin{theorem}
 Let $S$ be a compact Riemann surface with divisor $\bb = \sum \beta_ip_i$. Then there exists a conformal flat metric representing $\bb$ on $S$ if and only if $\chi(S,\bb) = 0$. Moreover this metric is unique up to homothety.
\end{theorem}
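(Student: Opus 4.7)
The ``only if'' direction is immediate from the Gauss--Bonnet Formula (Theorem~\ref{GB}): if $g$ is flat and represents $\bb$, then $\chi(S,\bb) = \frac{1}{2\pi}\int_S K\,dA = 0$. For the converse, my plan is to produce the flat metric by conformal rescaling of a smooth background metric. Fix any smooth Riemannian metric $\tilde g$ on $S$ compatible with the given conformal structure and look for $g = e^{2f}\tilde g$. Because $g$ must represent $\bb$, the function $f$ has to carry a logarithmic singularity of weight $\beta_i$ at each $p_i$; because $g$ must be flat, the conformal change formula $K_g\, e^{2f} = K_{\tilde g} + \Delta_{\tilde g} f$ forces $\Delta_{\tilde g} f = -K_{\tilde g}$ on the complement of $\mathrm{supp}(\bb)$.

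\textbf{Existence via Green's functions.} To manage the singularities I would absorb them using the Green's function $G(\cdot,p_i)$ of $\Delta_{\tilde g}$, normalized so that
$$\Delta_{\tilde g} G(\cdot,p_i) = \delta_{p_i} - \frac{1}{\mathrm{vol}(S,\tilde g)},$$
with local expansion $G(z,p_i) = -\frac{1}{2\pi}\log|z-p_i| + \text{(smooth)}$ in any isothermal chart at $p_i$. Setting $v := f + 2\pi\sum_i \beta_i G(\cdot,p_i)$, the requirements on $f$ are equivalent to the smooth Poisson equation
$$\Delta_{\tilde g} v = -K_{\tilde g} - \frac{2\pi}{\mathrm{vol}(S,\tilde g)}\sum_i \beta_i$$
on the closed surface $S$. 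By the Fredholm alternative for the Laplacian on a compact boundaryless surface, this is solvable if and only if the integral of the right--hand side against $dA_{\tilde g}$ vanishes; by classical Gauss--Bonnet applied to $\tilde g$ this integral equals $-2\pi(\chi(S) + \sum_i \beta_i) = -2\pi\chi(S,\bb)$, which is zero by hypothesis. Elliptic regularity gives a smooth solution $v$; reconstructing $f$ then produces a metric $g = e^{2f}\tilde g$ which is smooth and flat on $S\setminus\mathrm{supp}(\bb)$. Checking that in an isothermal coordinate at $p_i$ one has $g = e^{2w}|z-p_i|^{2\beta_i}|dz|^2$ with $w$ smooth (hence trivially $w,\Delta w \in L^1$) reduces to combining the local expansion of $G(\cdot,p_i)$ with the smoothness of $v$ and $\tilde g$.

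\textbf{Uniqueness.} Given two flat conformal metrics $g_1,g_2$ representing $\bb$, write $g_2 = e^{2\phi}g_1$. Near each $p_i$ the factors $|z-p_i|^{2\beta_i}$ cancel, so $\phi$ is an $L^1$ function on $S$ with $L^1$ distributional Laplacian. The conformal change formula combined with $K_{g_1} = K_{g_2} = 0$ on the smooth locus yields $\Delta_{g_1}\phi = 0$ classically on $S \setminus \mathrm{supp}(\bb)$; as the distributional Laplacian is then supported on the finite set $\mathrm{supp}(\bb)$ yet lies in $L^1$, it must vanish, so $\phi$ is distributionally harmonic on all of $S$. By Weyl's lemma $\phi$ is smooth, and a harmonic function on a compact boundaryless surface is constant, giving $g_2 = c\,g_1$. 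The main delicate point is precisely this passage between distributional and classical Laplacians across the singular set---one must be sure that the Dirac contributions introduced by the $\beta_i\log|z-p_i|$ terms are correctly tracked and that the $L^1$ condition on $\Delta u$ in the definition of a simple singularity really does force the ``extra'' distributional part of the Laplacian to be zero; once this is in place, everything else is a standard application of linear elliptic theory on a closed Riemann surface.
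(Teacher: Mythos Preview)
Your argument is correct. The paper takes a different but equally standard route: instead of working over a smooth background metric and pushing the singularities into the conformal factor via Green's functions, it first manufactures a singular reference metric $g_1$ that already represents $\bb$. Concretely, it picks isothermal coordinates $z_i$ at each $p_i$, a smooth conformal metric $g_0$ equal to $|dz_i|^2$ near each $p_i$, and a weight $\rho$ equal to $|z_i|^{2\beta_i}$ there; then $g_1=\rho g_0$ represents $\bb$ and, crucially, has $K_1\equiv 0$ near every $p_i$. One now seeks $g=e^{2u}g_1$ with $u$ \emph{smooth}, and the flatness equation $\Delta_1 u=-K_1$ rewrites as the genuinely smooth Poisson equation $\Delta_0 u=-\rho K_1$ on $S$, whose solvability condition $\int_S \rho K_1\,dA_0=\int_S K_1\,dA_1=2\pi\chi(S,\bb)=0$ again comes from Gauss--Bonnet. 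The payoff of the paper's decomposition is that the unknown $u$ is smooth from the outset, so uniqueness is a one--liner: any two flat representatives differ by $e^{2v}$ with $v$ smooth and harmonic on the closed surface, hence constant. Your Green's function approach buys you instead a completely explicit description of the singular part of the conformal factor and avoids the ad hoc construction of $g_1$, at the cost of the distributional/Weyl argument you carry out for uniqueness (which, incidentally, is exactly the point the paper glosses over).
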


This theorem has several proofs, see e.g. \cite{Troyanov1986} and \cite[\S 7]{HT}. We  give  here the proof  from \cite{HT}.

\begin{proof}
Introduce in the neighborhood  of each $p_i$  a coordinate $z_i$  such that $z_i(p_i) = 0$ and choose an arbitrary conformal metric $g_0$ on $S$ such that  $g_0 = |dz_i|^2$ in the neighborhood of  each $p_i$. Let us now choose a positive function $\rho : S \to \r$ which is of class $C^2$ on  $S\setminus \mathrm{supp}(\bb)$  and such that $\rho = |z_i|^{2\beta_i}$ in the neighborhood of each $p_i$.
The metric $g_1 = \rho g_0$ is then a conformal metric representing the divisor $\bb$. 

Since the desired metric must be conformal on $S$, it can be written as $g = e^{2u} g_1$. Note that if $u$ is a function of class $C^2$ on $S$ such that 
\begin{equation}\label{LaplK1}
  \Delta_1 u = -K_1,
\end{equation}
where $\Delta_1$ and $K_1$ denote   the Laplacian and the curvature of $g_1$, then  $g = e^{2u} g_1$  is a flat conformal metric representing $\bb$ on $S$. 
Because $\Delta_1$ is a singular operator,  it is more convenient  to write the previous equation as
\begin{equation}\label{LaplK2}
  \Delta_0 u = -\rho K_1,
\end{equation}
where $\Delta_0$ is the Laplacian of the smooth metric  $g_0$.

Note that \eqref{LaplK1} and  \eqref{LaplK2} are equivalent equations, but since $K_1$ vanishes in a  neighborhood of the points $p_i$ and the functions $\rho$ and $K_1$  are of class $C^2$ on  $S\setminus \mathrm{supp}(\bb)$, the right hand side  of \eqref{LaplK2} is  of class $C^2$ on the whole surface $S$.

It is well known that the partial differential equation \eqref{LaplK2} has a solution if and only if the integral of the right hand side vanishes, which  follows from the Gauss-Bonnet formula:
$$
 \int_S K_1 \rho dA_0 = \int_S K_1 dA_1 = 2\pi \chi(S,\bb) = 0.
$$
We have thus proved the existence of a flat conformal metric on $S$ representing the divisor $\bb$. 
The uniqueness follows from the fact that if $g_1$ and $g_2$ are two such metrics, then $g_2 = e^{2v} g_1$ for a harmonic function $v$ on $S$. This function is constant (because $S$ is a closed surface) and the two metrics are therefore homothetic.
\end{proof}
 
\medskip

The above Theorem gives us a short proof of the Uniformization Theorem for the sphere:

\begin{theorem}  \index{Uniformization Theorem (for the 2-sphere)}
  Any  Riemann surface homeomorphic to the two-sphere  is conformally equivalent to $\mathbb{C} \cup \{\infty\}$.
\end{theorem}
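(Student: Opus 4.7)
The plan is to apply the preceding classification theorem for flat metrics to a carefully chosen divisor. Pick any point $p \in S$ and consider the divisor $\bb = (-2)\cdot p$. Then
$$
\chi(S, \bb) = \chi(S) + (-2) = 2 - 2 = 0,
$$
so by the previous theorem there exists a conformal flat metric $g$ on $S$ representing $\bb$, i.e.\ smooth on $S' := S \setminus \{p\}$ and with a simple singularity of order $-2$ at $p$.

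Next I would argue that $(S', g)$ is a complete, simply connected, flat Riemannian surface. It is simply connected because $S$ is homeomorphic to the $2$-sphere, so $S'$ is homeomorphic to $\mathbb{R}^2$. It is flat by construction. Completeness follows from the fact, noted in Section 1 of the paper, that a simple singularity of order $\beta < -1$ is always at infinite distance from ordinary points: any divergent sequence in $(S',g)$ must accumulate at $p$, hence has infinite length.

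The third step is to identify $(S',g)$ with $(\mathbb{C}, |dz|^2)$. Since $(S',g)$ is simply connected, complete, and has curvature $K \equiv 0$, the Cartan--Hadamard theorem implies that the exponential map at any base point is a diffeomorphism onto $S'$; as it is a local isometry between flat surfaces, it is a global isometry from $(\mathbb{R}^2, dx^2+dy^2) = (\mathbb{C}, |dz|^2)$ to $(S', g)$. Let $\phi : S' \to \mathbb{C}$ denote the inverse; as an isometry between conformal surfaces it is, in particular, a biholomorphism.

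The final step is to extend $\phi$ to a conformal equivalence $\widehat{\phi}: S \to \mathbb{C} \cup \{\infty\}$ by setting $\widehat{\phi}(p) = \infty$. Since $\phi$ is a bijection onto $\mathbb{C}$, the function $\phi$ cannot remain bounded on any sequence converging to $p$ (a bounded limit value would contradict injectivity of $\phi$ on $S'$), so $\phi(z) \to \infty$ as $z \to p$. Hence $1/\phi$ extends continuously by zero at $p$ and, by the classical removable singularity theorem for holomorphic functions, extends holomorphically. This makes $\widehat{\phi}$ a biholomorphism $S \to \mathbb{C} \cup \{\infty\}$. The only mildly delicate point in the whole argument is the identification in step three, but it reduces to the standard Cartan--Hadamard theorem in the special case $K = 0$; everything else is bookkeeping built on the flat-metric existence theorem.
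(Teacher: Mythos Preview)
Your proof is correct and follows exactly the paper's approach: choose the divisor $(-2)\cdot p$, invoke the flat-metric classification, and identify the result with $(\mathbb{C}\cup\{\infty\},|dz|^2)$. You have simply unpacked what the paper dismisses as ``it is clear'' --- completeness via the order $<-1$ singularity, Cartan--Hadamard, and the removable-singularity extension at $p$ --- so the two arguments are essentially identical, yours being the fleshed-out version (the only nitpick is that an isometry might a priori be anti-holomorphic, but composing with conjugation fixes that immediately).
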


\begin{proof}
Let us choose a point $p$ in $S$ and consider the divisor $\bb = (-2)\cdot p$. Observe that $\chi(S, \bb) = 2 - 2 = 0$.
The previous theorem tells us   that there is a conformal  flat metric $g$ on $S$ representing this divisor. It is clear that $(S, g)$ is isometric (and thus conformally equivalent) to   $\left(\mathbb{C} \cup \{\infty\}, |dz|^2\right) $.
\end{proof}
 
\medskip

{\small  Of course the Uniformization  Theorem also  means   that  there exists a smooth  conformal  metric of constant positive curvature on $S$. However, it is hard to prove this result by directly solving the corresponding Berger-Nirenberg problem, that is by  directly constructing a conformal metric of curvature $+1$. The above proof on the other hand is almost trivial.}

\section{The  Berger--Nirenberg Problem  on Surfaces with Divisors} \index{The  Berger--Nirenberg Problem}

The classical  Berger-Nirenberg problem is the following:

\medskip 

\textbf{Problem 1.} Let $S$ be a Riemann surface and  $K : S \to  \mathbb{R}$ a function on this surface. Is there a conformal metric on $S$ whose curvature is the function $K$? If it exists, is such a metric unique?

\medskip 

This problem is clearly not well posed  for open surfaces. One could hope that the   problem is well posed for  complete Riemannian metrics, however, it is not difficult to construct families $\{g_{\lambda}\}$ of conformal metrics on a Riemann surface which are complete, conformal, of the same curvature and whose geometry at infinity drastically varies with $\lambda$,  in the sense that they are not mutually bilipschitz.   An example is given in \cite{HT1990}.
The previous discussion, in particular Huber's Theorem \ref{th.Huber}, suggests to replace the Berger-Nirenberg Problem on open  surfaces by a version of the  problem on compact surfaces with a divisor.

\medskip 

\textbf{Problem 2.} Let $(S,\beta)$ be a compact  Riemann surface  with divisor, and  $K : S \to  \mathbb{R}$ be a smooth function. Is there a conformal metric $g$ on $S$ that  represents $\bb$ and whose curvature is the function $K$? If it exists, is such a metric unique?

\medskip 

We have already answered this question when $K$ vanishes everywhere.

\smallskip 

Problem 2 is studied in the papers \cite{Troyanov1991} (in the case of conic singularity) and \cite{HT} in the general case. The results can be summarized in a form which is similar to the classical theory in the smooth case as it is exposed   in the foundational article  \cite{KW} by Jerry Kazdan and Frank Warner.

\begin{theorem}\label{prescrK}
Let $(S, \bb)$ be a compact Riemann surface with a divisor $\bb = \sum \beta_ip_i$ , and $K : S \to \r$ be  a smooth function. Suppose that there exists $p>1$ such that   $h_i(z) = |z-p_i|^{2\beta_i}K(z)$ is a function of class $L^p$ in a neighborhood of each $p_i$. Moreover
\begin{enumerate}[(a)]
\item If $\chi(S,\bb) >0$, we assume  $\sup(K) >0$ and $q\chi(S,\bb)<2$, where $q=p/(p-1)$.
\item If $\chi(S,\bb) =0$, we assume either that $K \equiv 0$  or  $\sup(K) >0$  and $\int_S KdA_0 < 0$, where $dA_0$
is the area element of a flat conformal metric representing $\bb$.
\item If $\chi(S,\bb) <0$, we assume $K \leq 0$ and $K \not\equiv 0$.
\end{enumerate}
Then there exists a conformal metric $g$ on $S $ which represents the divisor $\bb$
and whose curvature is $K$. In case (c), this metric is unique.
\end{theorem}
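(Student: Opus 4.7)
The overall plan is to convert the prescribed-curvature problem into a semilinear elliptic PDE on the smooth structure of $S$ and to analyse this PDE case by case, in the spirit of the Kazdan--Warner approach for smooth surfaces. First I would fix a conformal reference metric $g_0$ representing $\bb$: in case (b) the flat metric produced by the previous classification theorem (so $K_0 \equiv 0$), and in cases (a) and (c) a metric of constant curvature representing $\bb$, whose existence is known from Troyanov-type uniformization. Writing $g = e^{2u}g_0$, the equation $K_g = K$ becomes
$$
  \Delta_0 u + Ke^{2u} = K_0,
$$
and the hypothesis that $h_i(z) = |z-p_i|^{2\beta_i}K(z)$ lies in $L^p$ near each $p_i$ ensures that $Ke^{2u}$, expressed against the smooth measure $dA_0$, belongs to $L^p(S)$, which is precisely what is needed to run standard elliptic regularity up to the singular points.

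Case (c) is the most elementary: since $K\le 0$, $K\not\equiv 0$, and $\chi(S,\bb)<0$, sufficiently large positive and negative constants furnish a super- and a sub-solution of the PDE, and monotone iteration combined with $L^p$ elliptic regularity yields a solution of class $C^2$ off $\mathrm{supp}(\bb)$; uniqueness follows at once from the maximum principle applied to the difference of two solutions, exploiting $K\le 0$ and $K\not\equiv 0$. Case (b) is treated by a direct constrained variational argument: the sign conditions $\sup K >0$ and $\int K\,dA_0 < 0$ are the usual Kazdan--Warner obstructions, and one minimizes the Dirichlet integral $\int |\nabla u|^2 \, dA_0$ on the constraint set $\{\int Ke^{2u}\,dA_0 = 0,\ \int u\,dA_0 = 0\}$, coercivity coming from the Moser--Trudinger inequality for the smooth metric $g_0$.

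The main obstacle, and the heart of the proof, is case (a), where $\chi(S,\bb)>0$ and the variational problem is the most delicate. The natural strategy is to seek a minimizer of the functional
$$
 J(u) = \int_S |\nabla u|^2 \, dA_0 + 2\int_S K_0\, u\, dA_0 - 2\chi(S,\bb)\log\!\left(\int_S Ke^{2u}\,dA_0\right)
$$
on $\{u \in H^1(S,g_0) : \int u\,dA_0 = 0\}$, where the logarithmic term is well defined thanks to $\sup K > 0$. Its lower boundedness, and the precompactness of a minimizing sequence, rest on a sharp Moser--Trudinger inequality adapted to the divisor: for $u$ of zero mean,
$$
 \log\!\left(\int_S e^{2u}\,dA_1\right) \leq \frac{1}{8\pi\Lambda(\bb)}\int_S |\nabla u|^2 \, dA_0 + C,
$$
with an optimal constant $\Lambda(\bb)$ governed by the smallest $(1+\beta_i)$. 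The assumption $q\chi(S,\bb)<2$ is precisely what makes $J$ coercive, through a H\"older coupling of the $L^p$-control of $h_i$ with the dual $L^q$-exponential control provided by this Moser--Trudinger estimate; the hardest technical point is to rule out concentration of a minimizing sequence at the singular points $p_i$, which is exactly guaranteed by the sharp strict inequality $q\chi(S,\bb)<2$. Once a minimizer is obtained, elliptic regularity applied to $\Delta_0 u = K_0 - Ke^{2u}$ together with the $L^p$-integrability of $h_i$ gives the required behaviour of $u$ at each $p_i$, so that $g = e^{2u}g_0$ is a conformal metric representing $\bb$ with curvature $K$.
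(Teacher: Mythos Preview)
The paper does not actually prove this theorem: immediately after the statement it only says ``A very brief idea of the proof is presented in \cite{HT1990} (see \cite{Troyanov1991} and \cite{HT} for details)'' and moves on. Your sketch is therefore already far more detailed than anything in the paper, and in substance it is precisely the approach of the cited references: reduce to a Kazdan--Warner type equation relative to a background metric representing $\bb$; treat (c) by sub/super-solutions plus the maximum principle; treat (b) by a constrained variational argument under the sign conditions; and treat (a) by minimising the Liouville functional, with the singular Moser--Trudinger inequality and the H\"older pairing of the $L^p$-control on $h_i$ against the $L^q$-exponential bound yielding coercivity under $q\chi(S,\bb)<2$.

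One point to tighten: in case (a) you take as background ``a metric of constant curvature representing $\bb$, whose existence is known from Troyanov-type uniformization''. For $\chi(S,\bb)>0$ this is essentially the $K\equiv\mathrm{const}>0$ instance of the very theorem you are proving, so as written the argument is circular. The fix is immediate and is what the references actually do: take as background \emph{any} conformal metric $g_1$ representing $\bb$, built by hand exactly as in the paper's proof of the flat-metric classification (a smooth conformal metric multiplied by weights $|z_i|^{2\beta_i}$ near the $p_i$). The functional $J$ then carries the linear term $2\int K_1\,u\,dA_1$ with $K_1$ the (non-constant, but $L^1$) curvature of $g_1$, and the coercivity and concentration analysis you describe go through unchanged.
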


A very brief idea of the proof is presented in \cite{HT1990} (see \cite{Troyanov1991} and \cite{HT} for details).
Some particular cases of this theorem have been obtained previously by W. M. Ni, R. MacOwen and P. Aviles. At the beginning of the twentieth century, Emile Picard had already studied the case of curvature $- 1$ in \cite{Picard}.
The hypotheses of the previous theorem impose a decay of the curvature  when approaching the singularities of order $< - 1$. The next result, only valid for non-positive curvature,  does not impose such a behavior.

\begin{theorem}
Let $S$ be a compact Riemann surface and $g_1$ be a conformal metric representing a divisor $\bb = \sum_{i=1}^n  \beta_ip_i$
such that $n \geq 1$ and $\chi(S,\bb) < 0$. Let  $K : S \to \r$  be a smooth nonpositive function  such that 
$$ bK\leq K_1\leq aK<0$$
on the complement of  a compact subset of $S' = S \setminus \{p_1, \dots, p_n\}$, where $K_1$  is the curvature of $g_1$ and $a,b$  are positive constants. Then there exists a unique conformal metric $g$ on $S$ which represents $\bb$, has curvature $K$ and is 
conformally quasi-isometric to $g_1$.
\end{theorem}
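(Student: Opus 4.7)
The plan is to seek the new metric in the form $g = e^{2u}g_1$; the quasi-isometry requirement translates to $u$ being bounded, and the fact that $g$ then automatically represents the same divisor $\bb$ follows from $u$ being smooth and bounded on $S' = S\setminus\mathrm{supp}(\bb)$. The prescribed-curvature equation becomes
$$
  \Delta_1 u = Ke^{2u} - K_1 \qquad \text{on } S',
$$
where $\Delta_1$ is the Laplacian of $g_1$ in the sign convention of the paper. Since $K\le 0$, the right-hand side is non-increasing in $u$, which places us in the favourable setting of the monotone iteration (sub- and super-solution) method.

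The crucial step is the construction of bounded barriers $u_-\le u_+$ on $S'$. Outside the compact set $\Omega$ given by the hypothesis, the ratio $K_1/K$ takes values in $[a,b]$, so any constant $C_+$ with $e^{2C_+}\ge b$ is a super-solution of the equation on $S'\setminus\Omega$, and any constant $C_-$ with $e^{2C_-}\le a$ is a sub-solution there. To promote these local barriers to global ones I would correct them by adding bounded solutions $\varphi_\pm$ of two linear Poisson equations $\Delta_1\varphi_\pm = h_\pm$ on the closed surface $S$, with $h_\pm$ smooth, compactly supported in $S'$ and chosen so that $u_\pm = C_\pm + \varphi_\pm$ satisfy the sub- and super-solution inequalities on $\Omega$ as well. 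The Fredholm solvability conditions $\int_S h_\pm\, dA_1 = 0$ can be arranged because $\int_S K_1\, dA_1 = 2\pi\chi(S,\bb) \ne 0$ leaves room to balance the sources coming from the compact part. Once the ordered bounded pair $u_-\le u_+$ is obtained, a standard monotone iteration
$$
   \Delta_1 u_{k+1} + \lambda u_{k+1} = Ke^{2u_k} - K_1 + \lambda u_k, \qquad \lambda\gg 1,
$$
combined with interior elliptic regularity on $S'$ produces a smooth bounded solution $u$ between $u_-$ and $u_+$, and $g = e^{2u}g_1$ is the desired metric.

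For uniqueness among metrics conformally quasi-isometric to $g_1$, two bounded solutions $u_1,u_2$ satisfy the linear equation $\Delta_1(u_1-u_2) = c(x)(u_1-u_2)$ with $c(x) = 2K(x)e^{2\xi(x)}\le 0$ for some $\xi$ between $u_1$ and $u_2$. The strong maximum principle applied on $S'$, extended through the punctures by a removable-singularity argument based on the boundedness of $u_1-u_2$, together with $K\not\equiv 0$, forces $u_1\equiv u_2$. The main obstacle I anticipate is the sub-solution side: because $K\le 0$, lowering $u_-$ drives $Ke^{2u_-}$ towards zero rather than towards $-\infty$, so constant sub-solutions fail on $\Omega$ wherever $K_1>0$, and the auxiliary correction $\varphi_-$ must compensate a source term that is not sign-definite. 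It is precisely the hypothesis $\chi(S,\bb)<0$ that supplies the Fredholm compatibility needed to carry this out.
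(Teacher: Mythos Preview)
The paper does not give a proof of this theorem; it simply refers the reader to \cite[Theorems~8.1 and~8.4]{HT} and to \cite{McO}. Your outline---writing $g=e^{2u}g_1$ with $u$ bounded, reducing the problem to the semilinear equation $\Delta_1 u = Ke^{2u}-K_1$ on $S'$, and solving it by the sub/super-solution method using the monotonicity afforded by $K\le 0$---is exactly the strategy of those references, so you have essentially reconstructed the cited argument.

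Two technical points deserve more care than your sketch gives them. First, the auxiliary Poisson equations $\Delta_1\varphi_\pm=h_\pm$ cannot be posed na\"ively ``on the closed surface $S$'' because $g_1$ is singular at the $p_i$; in \cite{HT} one either passes to a smooth background metric as in the flat case (Section~4 above), or, more efficiently, first replaces $g_1$ by the conformal metric of constant curvature $-1$ representing $\bb$ (which exists since $\chi(S,\bb)<0$), after which constants serve as global barriers and no patching over the compact set $\Omega$ is needed. This is where the hypothesis $\chi(S,\bb)<0$ really enters---not as a Fredholm condition for the linear problem, as you suggest, but as the condition allowing a hyperbolic background. Second, the uniqueness step requires a maximum principle on the open surface $S'$, and boundedness of $u_1-u_2$ alone does not suffice without further input; the argument in \cite{HT} uses the structure of the ends (finite area or completeness) together with the quasi-isometry hypothesis to close this gap.
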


See \cite[Theorem 8.1, 8.4]{HT}  or  \cite{McO} for the proof.  As an application, using this  Theorem,  one can  construct metrics with prescribed (negative) curvature having cusps. The previous Theorem  also admits a generalization to non-compact Riemann surfaces of finite type having hyperbolic ends.

\medskip

We end this section  with two  results on the Berger-Nirenberg Problem on closed Riemann surfaces with divisor. The first result, proved in \cite{Tang} by Junjie Tang,  allows us to solve the Berger-Nirenberg Problem  when $\chi (S, \bb)$ is small enough.

\begin{theorem}
Let $S$ be a closed Riemann surface with a divisor  ${\pmb{{\pmb{\alpha}}}} = \sum_{i=1}^n \alpha_i p_i$ such that $\chi(S, {\pmb{\alpha}}) = 0$, and let
us pick a conformal metric $g_0$ representing ${\pmb{\alpha}}$. 

Suppose another divisor $\bb= \sum_{i=1}^n \beta_i p_i$ with same support is given on $S$,  such that  $\chi(S, \bb) < 0$ and consider a  function $K : S\to \r$  such that  $K = O(|z - p_i|^{\ell_i})$ (in the neighborhood of each $p_i$), where $\ell_i > -2(1+\alpha_i)$.

\smallskip 

\emph{(A)}  If $\beta_i \leq \alpha_i$ for all $i$, then a necessary condition for the existence of a conformal metric $g$ on $S$ representing 
$\bb$ is
\begin{equation}\label{kaneg}
  \int_S KdA_0 < 0,
\end{equation}
where $dA_0$ is the area element of $g_0$.

\smallskip 

\emph{(B)}  There exists $\varepsilon >0$  (depending on $S$, $\alpha$ and $K$) such that if $\max_i |\beta_i - \alpha_i| \leq \varepsilon$,
then \eqref{kaneg} is a sufficient condition for the existence of a conformal metric $g$ representing $\beta$.
\end{theorem}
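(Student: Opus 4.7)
The plan is to prove (A) by a direct integration-by-parts identity using the flat reference metric $g_0$ (which exists by the classification theorem since $\chi(S,{\pmb{\alpha}})=0$), and to prove (B) by a constrained variational argument perturbing around that flat case.

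For part (A), suppose $g$ exists and write $g = e^{2\psi}g_0$. Flatness of $g_0$ reduces the curvature equation on $S' = S \setminus \mathrm{supp}(\bb)$ to $\Delta_{g_0}\psi = K\,e^{2\psi}$, and near each $p_i$ one has $\psi(z) = (\beta_i - \alpha_i)\log|z| + \eta_i(z)$ with $\eta_i$ in the simple-singularity class. The key algebraic identity
\[
e^{-2\psi}\,\Delta_{g_0}\psi = -\tfrac{1}{2}\,\Delta_{g_0}\!\bigl(e^{-2\psi}\bigr) - 2\, e^{-2\psi}\,|\nabla\psi|_{g_0}^2
\]
combined with the PDE rewrites $K$ pointwise on $S'$ as $K = -\tfrac{1}{2}\Delta_{g_0}(e^{-2\psi}) - 2 e^{-2\psi}|\nabla\psi|^2$. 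The hypothesis $\beta_i \leq \alpha_i$ enters exactly here: $e^{-2\psi} \sim |z|^{2(\alpha_i - \beta_i)}$ near $p_i$ has non-negative exponent, so $e^{-2\psi}$ extends continuously across each $p_i$. Applying Stokes' formula on $S \setminus \bigcup D_{i,\epsilon}$ and letting $\epsilon \to 0$, the boundary integrals $\oint_{\partial D_{i,\epsilon}} \partial_\nu(e^{-2\psi})\,ds$ are of order $\epsilon^{2(\alpha_i - \beta_i)}$ (or $\epsilon$ when equality holds) and vanish in the limit, while the growth hypothesis $\ell_i > -2(1+\alpha_i)$ places $K$ in $L^1(dA_0)$ so the left-hand side converges. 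One obtains
\[
\int_S K\,dA_0 = -2\int_{S'} e^{-2\psi}|\nabla\psi|^2\,dA_0.
\]
Since $\chi(S,\bb) < \chi(S,{\pmb{\alpha}})$ forces at least one strict inequality $\beta_j < \alpha_j$, the function $\psi$ blows up at $p_j$ and is non-constant, so the right-hand side is strictly negative and gives $\int_S K\,dA_0 < 0$.

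For part (B), I would use a variational argument in the Kazdan-Warner style. Choose a reference metric $g_1$ representing $\bb$ (for instance $g_1 = e^{2\phi}g_0$ with $\phi$ a fixed function carrying logarithmic singularities of orders $\beta_i - \alpha_i$ at each $p_i$, and smooth elsewhere), with curvature $K_1$, and seek $g = e^{2u}g_1$ with $u$ solving the conformal curvature equation $\Delta_{g_1}u + K_1 = K\,e^{2u}$. Since $\chi(S,\bb) < 0$, one minimizes a functional of the form
\[
J(u) = \tfrac{1}{2}\int_S |\nabla u|^2\,dA_1 + 2\pi\chi(S,\bb)\log\!\Bigl(-\!\int_S K\,e^{2u}\,dA_1\Bigr),
\]
defined on the set $\{u : \int_S K\,e^{2u}\,dA_1 < 0\}$. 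Non-emptiness of this set at the test function $u \equiv 0$ reduces to $\int_S K\,dA_1 < 0$; writing $dA_1 = e^{2\phi}dA_0$, the weight $e^{2\phi}$ is uniformly close to $1$ in $L^1(dA_0)$ when $\max_i |\beta_i - \alpha_i| \leq \varepsilon$ is small, and the $L^p$ integrability of $h_i$ controls the contribution from neighborhoods of the $p_i$, so $\int K\,dA_1$ stays close to the strictly negative quantity $\int K\,dA_0$. The main obstacle is then proving coercivity and weak lower semicontinuity of $J$ on the appropriate weighted Sobolev space: this requires a Moser-Trudinger-type inequality adapted to the divisor $\bb$ (analogous to the one underlying Theorem~\ref{prescrK}), with constants controlled uniformly as $\bb$ varies in an $\varepsilon$-neighborhood of ${\pmb{\alpha}}$. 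The $L^p$ hypothesis on $h_i$ ensures the exponential nonlinearity is subcritical in the relevant Trudinger embedding, and the smallness $|\beta_i - \alpha_i| \leq \varepsilon$ keeps the relevant constants bounded away from the critical threshold. Once a minimizer is produced, its Euler-Lagrange equation delivers the desired metric, classical on $S'$ by elliptic regularity; uniqueness is not claimed because $K$ may change sign.
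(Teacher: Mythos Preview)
The paper itself does not prove this theorem; it is quoted as a result of Tang with only the reference \cite{Tang} given. There is therefore no in-paper proof to compare against, and I can only assess your proposal on its own merits.

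Your argument for (A) is correct in substance and rather clean. Multiplying the curvature equation $\Delta_{g_0}\psi = K e^{2\psi}$ by $e^{-2\psi}$ and integrating by parts does yield $\int_S K\,dA_0 = -2\int_{S'} e^{-2\psi}|\nabla\psi|_{g_0}^2\,dA_0$ once the boundary contributions vanish, and $\beta_i\le\alpha_i$ is exactly what makes $e^{-2\psi}\sim |z|^{2(\alpha_i-\beta_i)}$ bounded. Two technical points deserve a line each: (i) the circular boundary integrals need not vanish for \emph{every} $\epsilon\to 0$ but only along a well-chosen sequence of radii (standard, since the relevant quantities are in $L^1$ thanks to $\ell_i>-2(1+\alpha_i)$); (ii) at points with $\alpha_i=\beta_i$ the regularity of the remainder $\eta_i$ must come from bootstrapping the equation, not from the bare definition of simple singularity. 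The strictness at the end is fine: $\chi(S,\bb)<\chi(S,{\pmb{\alpha}})$ forces $\psi$ nonconstant.

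Your treatment of (B) is only an outline and leaves the real difficulty untouched. First, the functional you write is not the right one: its Euler--Lagrange equation is $\Delta_1 u = \lambda K e^{2u}$ and misses the $K_1$ term (and $g_1$ cannot be flat since $\chi(S,\bb)<0$), so at minimum you need $\tfrac12\int|\nabla u|^2\,dA_1 + \int K_1 u\,dA_1$ under a Gauss--Bonnet constraint, or else a sub/super-solution or implicit-function argument perturbing off the flat metric at ${\pmb{\alpha}}$. Second, and more seriously, you defer everything to ``a Moser--Trudinger-type inequality adapted to $\bb$ with constants controlled uniformly as $\bb$ varies in an $\varepsilon$-neighborhood of ${\pmb{\alpha}}$'': that uniform control \emph{is} the content of (B), and nothing in your sketch explains where it comes from or how the smallness of $\varepsilon$ actually enters the estimates. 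Your dominated-convergence remark does show the constraint set is non-empty for small $\varepsilon$, but that is the easy part; coercivity and the existence of a minimiser remain unaddressed.
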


\medskip

The second result is due to Dominique Hulin \cite{Hulin1993}. It  allows us to solve the problem when $K$ is close enough  to a given non-positive function: 
\begin{theorem} 
Let $S$ be a compact Riemann surface with a divisor $\bb= \sum_{i=1}^n \beta_i p_i$  such that $\chi(S, \bb) < 0$.
Let $K_1, k : S \to \mathbb{R}$ be two smooth functions on $S$ such that $K_1 \leq 0 \leq k$ everywhere on $S$, and $K_1 \not\equiv 0$.
Suppose that $|z-p_i|^{2\beta_i}(k(z) - K_1(z)) \in L^p$ for some $p>1$ in the neighborhood of the points  $p_i$. 
\  Then there exists a constant $C > 0$ (depending on $S$, $\bb$, $K_1$ and $k$) such that if
$$
K_1\leq K \leq K_1 + Ck
$$
on $S$, then there exists a conformal metric $g$ of curvature $K$ on $S$ which represents $\bb$. 
\end{theorem}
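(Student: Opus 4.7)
\emph{Step 1: Construct a base metric of curvature $K_1$.} Since $K_1 \leq 0 \leq k$, we have pointwise $|K_1| \leq k - K_1$, so the hypothesis $|z-p_i|^{2\beta_i}(k-K_1) \in L^p$ gives $|z-p_i|^{2\beta_i}K_1 \in L^p$ near each $p_i$. Combined with $K_1 \leq 0$, $K_1 \not\equiv 0$ and $\chi(S,\bb) < 0$, Theorem \ref{prescrK}(c) produces a conformal metric $g_1$ representing $\bb$ whose curvature is $K_1$. From now on we work with $g_1$, its Laplacian $\Delta_1$ and area element $dA_1$.

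\emph{Step 2: Reduce to a single semilinear equation.} Seek $g = e^{2u} g_1$. Since both $g$ and $g_1$ represent the same divisor, $u$ must be a bounded (indeed continuous) function on $S$, and the curvature equation becomes
$$
 \Delta_1 u = K e^{2u} - K_1,
$$
which we rewrite as $F(u,h)=0$ with $h := K - K_1$, where
$$
 F(u,h) := \Delta_1 u - (K_1 + h)e^{2u} + K_1.
$$
Note that $F(0,0) = 0$, so $u \equiv 0$ is the solution corresponding to $h = 0$ (i.e.\ to the metric $g_1$).

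\emph{Step 3: Invert the linearization.} The derivative in $u$ at $(0,0)$ is the linear operator
$$
 L v \;=\; F_u(0,0)[v] \;=\; \Delta_1 v - 2 K_1 v.
$$
Because $-2K_1 \geq 0$ and $K_1 \not\equiv 0$, the associated quadratic form
$\int_S (|\nabla v|^2 - 2 K_1 v^2)\, dA_1$
is positive definite, so $L$ has trivial kernel; being self-adjoint and elliptic, $L$ is then an isomorphism between the appropriate weighted Sobolev spaces on $(S,\bb)$ (the natural analogues of $W^{2,p}$ and $L^p$ with the weights $|z-p_i|^{2\beta_i}$ used in Theorem~\ref{prescrK}).

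\emph{Step 4: Apply the implicit function theorem and choose $C$.} The hypothesis on $k-K_1$ shows that the map $h \mapsto (K_1+h)e^{2u}$ is smooth from a neighborhood of the origin in this weighted $L^p$ space into itself, so $F$ is $C^1$ and the implicit function theorem yields $\varepsilon > 0$ and a smooth solution map $h \mapsto u(h)$ on the ball $\{\|h\| < \varepsilon\}$. Now for $K_1 \leq K \leq K_1 + Ck$ we have $0 \leq h = K - K_1 \leq Ck$, and
$$
 \bigl\| |z-p_i|^{2\beta_i} h \bigr\|_{L^p} \;\leq\; C \bigl\| |z-p_i|^{2\beta_i} k \bigr\|_{L^p} \;\leq\; C \bigl\| |z-p_i|^{2\beta_i}(k-K_1) \bigr\|_{L^p}.
$$
Choosing $C$ small enough that the right-hand side is $< \varepsilon$ places $h$ in the domain of the IFT, and $u(h)$ provides the desired metric $g = e^{2u(h)} g_1$ of curvature $K$ representing $\bb$.

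\emph{Main obstacle.} The substantive difficulty is carrying out Steps 2--3 in function spaces compatible with the conical/cusp/infinite-distance singularities encoded by $\bb$: one must identify weighted $W^{2,p}$ spaces in which $\Delta_1$ is Fredholm of index zero, on which $L = \Delta_1 - 2K_1$ has the right mapping properties, and on which $u \mapsto e^{2u}$ remains smooth — exactly the analytic framework developed in \cite{HT,Troyanov1991} for the Berger--Nirenberg problem on surfaces with a divisor. Once this framework is in place, the coercivity of $L$ and the pointwise domination of $h$ by $C(k-K_1)$ make the IFT argument essentially routine.
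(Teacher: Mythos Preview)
The paper does not actually prove this theorem: after the statement it only remarks that ``the dependence of the constant $C$ on $S$, $\bb$, $k$ and $K_1$ is explicitly given in \cite[Theorem 6.1]{Hulin1993}'', so there is no in-text argument to compare against.

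Your outline is mathematically sound. Step~1 is a clean reduction: the inequality $|K_1|=-K_1\le k-K_1$ indeed puts $|z-p_i|^{2\beta_i}K_1$ in $L^p$, so Theorem~\ref{prescrK}(c) legitimately supplies the base metric $g_1$. The linearization $L=\Delta_1-2K_1$ is correctly computed, and the coercivity argument (since $-2K_1\ge 0$, $\not\equiv 0$) gives a trivial kernel; once one has the weighted Sobolev framework you cite from \cite{HT,Troyanov1991}, the IFT step goes through. You also correctly identify that framework as the real analytic content.

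One point worth flagging is that the paper stresses Hulin gives the constant $C$ \emph{explicitly}. An implicit-function-theorem argument typically produces a non-effective $\varepsilon$ coming from the norm of $L^{-1}$ and the modulus of continuity of $F$. The explicit constant in \cite{Hulin1993} is obtained by a sub/super\-solution (barrier) method: since $K\ge K_1$, the function $u\equiv 0$ is already a subsolution of $\Delta_1 u = K e^{2u}-K_1$, and one builds an explicit supersolution from the $L^p$ bound on $|z-p_i|^{2\beta_i}(k-K_1)$, which is what makes $C$ computable. So your route is correct and conceptually clean, but trades the explicit constant for softness; the barrier method buys the quantitative statement the paper highlights.
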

The dependence of the constant $C$ on $S$, $\bb$, $k$ and $K_1$ is explicitly  given in  \cite[Theorem 6.1]{Hulin1993}

\newpage

\section{Spherical Polyhedra}

The following result classifies spherical surfaces with less than three conical points.
\begin{theorem}
Let $g$ be a metric on the sphere $S^2$ representing a divisor $\bb = \beta_1p_1 + \beta_2p_2$   and whose curvature is constant $K = + 1$. Then $\beta_1 = \beta_2$,  moreover:
\begin{enumerate}[(a)]
\item  If $\beta_i$ is not an integer, then $p_1$ and $p_2$ are conjugate points (that is $d(p_1,p_2) = \pi$).
\item  If $\beta = m \in \mathbb{N}$, then   $(S, g)$ is isometric to a branched covering of degree $m+1$ of the standard sphere  $\mathbb{S}^2$, 
(with its canonical metric) branched over two points, with  ramification order equal to $m$.
Moreover, two such metrics are isometric if and only if their singularities are of the same order and separated by the same distance.
\end{enumerate}
\end{theorem}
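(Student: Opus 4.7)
The plan is to analyze the developing map of $g$, with a case split by its monodromy. The Uniformization Theorem of Section 4 identifies $S$ conformally with $\C\cup\{\infty\}$; after a M\"obius transformation take $p_1=0$ and $p_2=\infty$. The universal covering of $S\setminus\{p_1,p_2\}=\C^*$ is $\pi(w)=e^w$ from $\C$, and since $\pi^*g$ has constant curvature $+1$ on the simply connected surface $\C$, there is a holomorphic developing map $f\colon\C\to\C\cup\{\infty\}$ with $\pi^*g=f^*g_{\mathrm{std}}$ and monodromy $f(w+2\pi i)=M\cdot f(w)$ for some isometry $M$ of the round sphere. The conical structure at each singularity yields asymptotic expansions $f(w)-a\sim c_1 e^{(\beta_1+1)w}$ as $\Re w\to -\infty$ and $f(w)-a'\sim c_2 e^{-(\beta_2+1)w}$ as $\Re w\to +\infty$ in suitable local charts on the target, where $a,a'\in\C\cup\{\infty\}$ are fixed by $M$; the local multipliers of $M$ at $a$ and $a'$ are $e^{2\pi i(\beta_1+1)}$ and $e^{-2\pi i(\beta_2+1)}$ respectively.

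If $M=\mathrm{id}$, both multipliers equal $1$, so $\beta_1,\beta_2\in\mathbb{N}$, and $f$ descends to a rational map $\bar f$ on $\C\cup\{\infty\}$. Any critical point of $\bar f$ outside $\{0,\infty\}$ would produce an extra conical singularity of $g$, so $\bar f$ ramifies only at $0$ and $\infty$, with local degrees $\beta_1+1$ and $\beta_2+1$. Riemann--Hurwitz gives $\beta_1+\beta_2=2\deg(\bar f)-2$, while $\beta_i+1\le\deg(\bar f)$ forces $\beta_1=\beta_2=\deg(\bar f)-1=:m$ and full ramification of $\bar f$ over each of its two branch values; hence $\bar f$ is conjugate via M\"obius transformations of source and target to $z\mapsto z^{m+1}$, proving (b). Two such metrics sharing $m$ and $d(p_1,p_2)$ come from branched covers whose target branch values can be identified by a rotation of $\mathbb{S}^2$; the resulting source maps then differ by a M\"obius automorphism which is the sought isometry.

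If $M\neq\mathrm{id}$, then $M$ is a nontrivial rotation with exactly two antipodal fixed points in $\mathbb{S}^2$, and $a,a'$ lie among them. Normalise $M(z)=\lambda z$ with fixed points $0,\infty$. In the sub-case $a=0$ and $a'=\infty$, matching the two multipliers gives $\beta_2-\beta_1\in\mathbb{Z}$, and setting $\bar h(e^w):=f(w)e^{-(\beta_1+1)w}$ produces a rational function $\bar h$ on $\C\cup\{\infty\}$ with $\bar h(0)$ finite and nonzero. The requirement that $f'$ not vanish on $\C$—otherwise a third conical singularity would appear—becomes the demand that $H(z):=(\beta_1+1)\bar h(z)+z\bar h'(z)$ vanish nowhere on $\C^*$. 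Since each pole of $\bar h$ of order $p_i$ produces a pole of $H$ of order $p_i+1$, a direct count of zeros versus poles of $H$, together with the values $H(0)=(\beta_1+1)\bar h(0)\neq 0$ and the controlled behaviour at $\infty$, forces $\bar h$ to be a nonzero constant; hence $f(w)=c_1 e^{(\beta_1+1)w}$. Asymptotic consistency at the two ends then forces $\beta_1=\beta_2=:\beta$, and the integral $\int_0^\infty 2(\beta+1)r^\beta/(1+r^{2(\beta+1)})\,dr=\pi$ yields $d(p_1,p_2)=\pi$, proving (a). The remaining sub-case $a=a'$ is excluded by the same degree count, since the corresponding $\bar h$ would need a zero of order $n=\beta_1+\beta_2+2>0$ at $\infty$ while still satisfying the no-zero condition on $H$, which is incompatible with the pole inflation.

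The main technical obstacle is the degree count in Case B that pins down $\bar h$ as a constant: one must carefully balance the inflation of poles of $H$ coming from poles of $\bar h$ against its values at $0$ and $\infty$, and then separately handle the sub-case $a=a'$, where the forced high-order zero at $\infty$ must be shown to be inconsistent with the same count.
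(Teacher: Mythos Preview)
The paper states this theorem in Section~6 without proof; it is essentially the main result of \cite{Troyanov1989}, which the paper lists but does not reproduce. So there is no ``paper's own proof'' to compare against here. Your developing-map strategy is the standard route to this result and is essentially correct, but a few points deserve more care.

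\medskip

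\textbf{Asymptotics and rationality of $\bar h$.} You assert that the conical structure forces $f(w)-a\sim c_1e^{(\beta_1+1)w}$ and that $\bar h$ extends to a \emph{rational} function on $\mathbb{CP}^1$. This is true for constant curvature metrics, but it is not automatic from the bare definition of a simple singularity (where $u$ is only $L^1$ with $\Delta u\in L^1$). You should invoke the elliptic regularity of the Liouville equation to get continuity (indeed smoothness) of $u$ up to the cone point, and then show $|\bar h|$ is bounded and bounded away from $0$ near $z=0$ by integrating the estimate $|f'|\asymp e^{(\beta_1+1)\Re w}$. Without this, the extension of $\bar h$ across $0$ and $\infty$ is unjustified.

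\medskip

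\textbf{The ``no extra singularity'' condition.} You phrase the absence of a third cone point as ``$f'$ does not vanish on $\C$'', but $f$ is a map to the sphere and may have poles. The correct condition is that $f$ is everywhere a local biholomorphism to $\mathbb{CP}^1$, i.e.\ $f'\neq 0$ where $f$ is finite \emph{and} every pole of $f$ is simple. In particular all zeros and poles of $\bar h$ in $\C^*$ are simple. This actually helps your degree count: with $k$ simple poles and $l$ simple zeros of $\bar h$ in $\C^*$ and $\bar h\sim cz^{n}$ at $\infty$, one has $l=k+n$; the zero/pole balance for $H$ then forces $n=0$, $k=0$ (the case $n<0$ would require $l=-k<0$), so $\bar h$ is constant and $\beta_1=\beta_2$. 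Your sketch gestures at this but does not state the simplicity of the poles, which is what makes the ``pole inflation'' count go through cleanly.

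\medskip

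\textbf{The isometry classification in (b).} Your last sentence in the $M=\mathrm{id}$ case is too brisk. Once $\bar f$ is normalised to $T\circ(z\mapsto z^{m+1})$ for some M\"obius $T$, the isometry class of $g=\bar f^*g_{\mathrm{std}}$ depends only on the unordered pair $\{T(0),T(\infty)\}$ up to rotation of $\mathbb{S}^2$, hence only on their spherical distance; you should then check that this target distance equals $d_g(p_1,p_2)$ (a radial geodesic in the source maps isometrically to a geodesic arc between $T(0)$ and $T(\infty)$, and one verifies it is minimising).

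\medskip

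With these points filled in, your argument is complete and matches the approach of \cite{Troyanov1989}.
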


Let us call  \emph{spherical polyhedra}    \index{Spherical polyhedra} a Riemannian surface homeomorphic to the sphere, with conical singularities of order 
$\beta_i  \in (-1,0)$ and whose  Gauss curvature is constant $K = +1$. 
A fundamental Theorem by A. D.  Aleksandrov  states that a Riemannian surface, homeomorphic to the sphere, with   constant curvature  $K = +1$ and
 conical singularities of order $\beta_i \in (-1,0)$  can be realized as the boundary  of a convex polytope 
in the standard three-sphere $\mathbb{S}^3$ (see  \cite[Chapter XII, p. 400]{Alexandrov} ).  Note that from the previous Theorem,  a spherical polyhedron cannot have exactly one singularity.

\medskip 

The last result  classifies the divisors that can be represented by a spherical polyhedron having at least three singularities:
\begin{theorem}
Let $\bb = \sum_{i=1}^n \beta_i p_i$ be a divisor on $S^2 = \mathbb{C} \cup \infty$ such  $n\geq 3$ and that $- 1 < \beta_i < 0$ for all $i$.
Then there exists a unique conformal metric $g$ with constant curvature $K = +1$ representing $\bb$ if and only if
\begin{equation}\label{CondLT}
  0 < 2 + \sum_{i=1}^n \beta_i < 2(1 + \min_i \{\beta_i\}).
\end{equation}
\end{theorem}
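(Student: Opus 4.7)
The plan is to reduce the problem to a semilinear elliptic PDE and attack it by the direct method in the calculus of variations. Start by fixing a conformal reference metric $g_0$ on $S^2$ representing $\bb$, constructed as in the proof of the flat case by gluing local models $|z - p_i|^{2\beta_i}|dz|^2$ to a smooth background metric on the complement of $\mathrm{supp}(\bb)$. Any competitor has the form $g = e^{2u}g_0$, and the equation $K(g) = +1$ becomes, up to sign conventions,
\begin{equation*}
\Delta_0 u + K_0 = e^{2u},
\end{equation*}
which is the Euler--Lagrange equation of the Liouville-type functional
\begin{equation*}
J(u) = \frac{1}{2}\int_{S^2}|\nabla u|^2\, dA_0 + \int_{S^2} K_0 u\, dA_0 - \chi(S^2,\bb)\log\int_{S^2} e^{2u}\, dA_0.
\end{equation*}
Since we are in the case $\chi(S^2,\bb) > 0$, the functional $J$ is \emph{not} convex; existence of critical points must come from coercivity and minimization, and uniqueness requires a separate argument.

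The necessity of the first inequality is immediate from Theorem \ref{GB}: integration of $K \equiv 1$ gives $\mathrm{Area}(g) = 2\pi\chi(S^2,\bb)$, which forces $\chi(S^2,\bb) > 0$. The necessity of the second inequality I would establish by a concentration argument at the sharpest cone. Let $\beta_j = \min_i \beta_i$, and plug into $J$ a one-parameter family of test functions $u_\lambda$ modelled on spherical bubbles centered at $p_j$, namely pullbacks under a suitable local chart of the standard spherical metric rescaled by $\lambda$. The asymptotics of $J(u_\lambda)$ as $\lambda \to 0$ are governed by the ratio $\chi(S^2,\bb)/2(1+\beta_j)$: whenever this ratio exceeds $1$, one has $J(u_\lambda) \to -\infty$, so the infimum is not attained and no solution can exist.

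For sufficiency, the key analytic tool is a singular Moser--Trudinger inequality adapted to the divisor $\bb$: for $u \in H^1(S^2)$ with vanishing $g_0$-mean,
\begin{equation*}
\log\int_{S^2} e^{2u}\, dA_0 \leq \frac{1}{4\pi(1+\min_i \beta_i)}\int_{S^2}|\nabla u|^2\, dA_0 + C,
\end{equation*}
with critical constant dictated precisely by the smallest cone angle. Inserting this into $J$ and invoking the strict inequality $\chi(S^2,\bb) < 2(1+\min_i\beta_i)$ yields coercivity of $J$ modulo the mean, so $J$ is bounded below and a minimizing sequence converges weakly in $H^1$ to some $u^\ast$. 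Standard elliptic regularity on $S^2\setminus\mathrm{supp}(\bb)$, together with the a priori conical model at each $p_i$, shows $u^\ast$ is smooth off the support and has the correct singular profile, producing the desired metric $g = e^{2u^\ast}g_0$. Uniqueness would then be handled separately via Bol's isoperimetric inequality for conformal metrics with $K \leq 1$ combined with a rigidity step that promotes equality there into isometry.

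The main obstacle will be the sharp singular Moser--Trudinger inequality with the optimal constant $4\pi(1+\min_i\beta_i)$: the critical exponent is forced by the worst cone, and extremals are Onofri-type bubbles centered at that vertex. The strictness of the hypothesis is essential throughout, since at equality in either of the two inequalities the functional loses coercivity and bubbling destroys compactness of minimizing sequences; this borderline behavior is what makes the condition (\ref{CondLT}) both necessary and sufficient.
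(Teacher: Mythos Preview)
Your existence argument is essentially the one the paper invokes: the paper does not prove this theorem in-line but appeals to Theorem~\ref{prescrK} (case~(a), proved in \cite{Troyanov1991}), whose mechanism is precisely the singular Moser--Trudinger inequality with critical constant $4\pi(1+\min_i\beta_i)$, yielding coercivity of the Liouville functional exactly under the strict upper bound in~\eqref{CondLT}. So on the sufficiency side you are aligned with the paper.

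There is, however, a genuine gap in your necessity argument for the second inequality. Showing that $J(u_\lambda)\to-\infty$ along a bubbling family only proves that $J$ is unbounded below and that the \emph{minimization} scheme fails; it does not rule out the existence of other critical points (saddles, local minima). In the regime $\chi(S^2,\bb)>0$ the functional is non-convex, and non-attainment of the infimum is simply not equivalent to non-existence of solutions. The paper defers both the necessity of the upper bound and the uniqueness to Luo--Tian \cite{LT}, whose argument is of a different nature: it is geometric rather than variational, exploiting the developing map of a spherical structure with cone points (equivalently, cutting the surface into a spherical polygon) to obtain angle constraints analogous to the inequality $0<2\pi+\sum(\varphi_i-\pi)<2\min\varphi_i$ for spherical convex polygons that the paper quotes just after the statement. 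Your Bol-type sketch for uniqueness is in a similar spirit but would need the full rigidity step; the paper again just cites \cite{LT} rather than carrying this out.
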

Expressed with the cone angles $\theta_i = 2\pi(1+\beta_i)$, the hypothesis can also be written as $\theta_i < 2\pi$ and 
\begin{equation}\label{CondLT2}
 0 < 4\pi +  \sum_{i=1}^n (\theta_i - 2\pi)  < 2  \min_i \{\theta_i\}.
\end{equation}
The first inequality in this condition is none other than the Gauss-Bonnet formula.

Let us observe that the condition \eqref{CondLT2} is similar to the condition satisfied by the angles 
$\varphi_1, \dots, \varphi_n$   of a spherical convex polygon:
$$
 0 < 2\pi +  \sum_{i=1}^n (\varphi_i - \pi)  < 2  \min_i \{\varphi_i\}.
$$
The existence of a spherical metric representing $\bb$ follows from Theorem \ref{prescrK} above (see also \cite{Troyanov1991}). The necessity of \eqref{CondLT}, as well as the uniqueness of the metric have been proved by Feng Luo and Gang Tian  in \cite{LT}. 

\bigskip

\textit{Note added in 2021.} 
The subject of this survey has grown considerably over the past 30 years. A nice reference covering more recent aspects of the theory is the article \cite{Lai} by M. Lai


 \printindex


\begin{thebibliography}{AAA}


\bibitem{Alexandrov}  
A. D. Alexandrov selected works. Part II.
Intrinsic geometry of convex surfaces. Edited by S. S. Kutateladze. Translated from the Russian by S. Vakhrameyev.
Chapman and Hall/CRC, Boca Raton, FL, 2006.


\bibitem{Finn} Finn, R.
\textit{On a class of conformal metrics, with application to differential geometry in the large.}
Comment. Math. Helv. 40 (1965), 1–30. 

\bibitem{Hulin1993}
Hulin, D.
\textit{Déformation conforme des surfaces non compactes à courbure négative}.  
J. Funct. Anal. 111 (1993), no. 2, 449–472. 


\bibitem{HT1990}  
Hulin, D. and Troyanov, M. 
\textit{Sur la courbure des surfaces de Riemann ouvertes. } 
C. R. Acad. Sci. Paris Sér. I Math. 310 (1990), no. 4, 203–206. 

\bibitem{HT}  
Hulin, D. and Troyanov, M. 
\textit{Prescribing curvature on open surfaces.}
Math. Ann. 293 (1992), no. 2, 277–315. 

\bibitem{KW}
Kazdan, J. and Warner, F. W.
\textit{Curvature functions for compact 2-manifolds.}
Ann. of Math. (2) 99 (1974), 14–47. 

\bibitem{Lai}
Lai, Mijia, 
\textit{Metric aspects of conic surfaces. }
Front. Math. China 11 (2016), no. 5, 1291–1312. 

\bibitem{LT}
Luo, F.  and Tian,  G. 
\textit{Liouville equation and spherical convex polytopes.}
Proc. Amer. Math. Soc. 116 (1992), no. 4, 1119–1129. 

\bibitem{McO}
McOwen, R.
\textit{Prescribed curvature and singularities of conformal metrics on Riemann surfaces.}
J. Math. Anal. Appl. 177 (1993), no. 1, 287–298. 

\bibitem{Picard}
Picard, E.
\textit{De l'intégration de l'équation $\Delta u = e^u$ sur une surface de Riemann fermée.}  
J. Reine Angew. Math. 130 (1905), 243–258. 

\bibitem{Shiohama}
Shiohama, K.
\textit{Total curvatures and minimal area of complete surfaces.}
Proc. Amer. Math. Soc. 94 (1985), no. 2, 310–316. 
 
 
\bibitem{Tang}
Tang, J.-J.
\textit{Prescribing curvature with negative total curvature on open Riemann surfaces.}
Proc. Amer. Math. Soc. 116 (1992), no. 4, 1023–1030. 

\bibitem{Troyanov1986} Troyanov, M. 
\textit{Les surfaces euclidiennes à singularités coniques. }
Enseign. Math. (2) 32 (1986), no. 1-2, 79--94. 

\bibitem{Troyanov1989} Troyanov, M. 
\textit{Metrics of constant curvature on a sphere with two conical singularities.} in Differential geometry,
296–306, Lecture Notes in Math., 1410, Springer, Berlin, 1989. 

\bibitem{Troyanov1990} Troyanov, M.
\textit{Surfaces riemanniennes à singularités simples.}
in Differential geometry: geometry in mathematical physics and related topics, 619--628,
Proc. Sympos. Pure Math., 54, Part 2, Amer. Math. Soc., Providence, RI, (1993).

\bibitem{Troyanov1990b} Troyanov, M.
\textit{Coordonnées polaires sur les surfaces riemanniennes singulières}.
Ann. Inst. Fourier, 40 (1990) 913-937.

\bibitem{Troyanov1991} Troyanov, M.
\textit{Prescribing Curvature on Compact Surfaces with Conical singularitie}s.
Trans. Amer. Math. Soc. 324 (1991) 793-821. 

\end{thebibliography}
\end{document}